\theoremstyle{plain}
\newtheorem{proposition}{Proposition}[section]
\newtheorem{theorem}[proposition]{Theorem} 
\newtheorem{lemma}[proposition]{Lemma}
\newtheorem{question}[proposition]{Question}
\newtheorem{corollary}[proposition]{Corollary}
\theoremstyle{definition}
\newtheorem{remark}[proposition]{Remark}
\def\sdisc{{\mathrm{sDisc}}}
\def\disc{{\mathrm{Disc}}}
\def\rootdisc{c}
\def\rootsdisc{f}
\def\harmonics{\mathcal{H}}
\def\mat{{\mathcal{M}}}
\def\tzmat{{\mathcal{N}}}
\def\hermat{{\mathrm{Her}}}
\def\symmat{{\mathrm{Sym}}}
\def\ideal{{\mathcal{I}}}
\def\tange{c}
\def\inv{d}
\def\bideg{{\mathrm{bideg}}}
\def\cocoa{{\hbox{\rm C\kern-.13em o\kern-.07em C\kern-.13em o\kern-.15em A}}}
\def\field{{\mathbb{F}}}
\def\mr{{\mathbb{R}}}
\def\mn{{\mathbb{N}}}
\def\mc{{\mathbb{C}}}
\def\mz{{\mathbb{Z}}}
\def\tr{{\mathrm{tr}}}
\def\covariants{\mathrm{Cov}}
\begin{document}

\title{Hermitian matrices with a bounded number of eigenvalues}
\author{M. Domokos\thanks{Partially supported by OTKA  NK81203 and K101515.}
\\ {\small R\'enyi Institute of Mathematics, Hungarian Academy of 
Sciences,} 
\\ {\small 1053 Budapest, Re\'altanoda utca 13-15., Hungary} \\ 
{\small E-mail: domokos.matyas@renyi.mta.hu } 
}
\date{}
\maketitle 
\begin{abstract} 
Conjugation covariants of matrices are applied to study  the real algebraic variety consisting of complex Hermitian matrices with a bounded number of distinct eigenvalues. 
A minimal generating system of the vanishing ideal of degenerate three by three Hermitian matrices is given, and the structure of the corresponding coordinate ring 
as a module over the special unitary group is determined. 
%the Hilbert series of  the corresponding coordinate ring is computed.  
The method applies also for  degenerate real symmetric three by three matrices. 
For arbitrary $n$ partial information on the minimal degree component of the vanishing ideal of the variety of $n\times n$ Hermitian matrices with a bounded number of eigenvalues is obtained, and some known results on sum of squares presentations of subdiscriminants of real symmetric matrices are extended to the case of complex Hermitian matrices.

\noindent MSC:  Primary: 13F20,  13A50, 14P05;  Secondary:   15A15,  15A72, 20G05, 22E47

\noindent {\it Keywords: Hermitian matrices, covariants, unitary group, subdiscriminants, real algebraic varieties}
\end{abstract}

%%%%%%%%%%%%%%%%%%%%%%%%%%%%%%%%%%%

\section{Introduction}\label{sec:intro} 

Let $\field$ be the field $\mr$ of real numbers or the field $\mc$ of complex numbers. For a matrix $A\in \mc^{n\times n}$ denote $\bar A$ and $A^T$  the complex conjugate and  transpose of $A$, respectively. 
Fix a positive integer $n\ge 2$, and let $\mat$ be one of the following $\field$-subspaces of $\mc^{n\times n}$: 
\begin{itemize}
\item[(a)] the Hermitian matrices 
$\hermat(n)=\{A\in \mc^{n\times n}\mid \bar A=A^T\}$ 

\item[(b)] the real symmetric matrices $\symmat(n,\mr)=\{A\in \mr^{n\times n}\mid A^T=A\}$

\item[(c)]  all $n\times n$ complex matrices $\mc^{n\times n}$ 

\item[(d)] the complex symmetric matrices 
$\symmat(n,\mc)=\{A\in\mc^{n\times n}\mid A^T=A\}$
\end{itemize}

For $k=0,1,\dots,n-1$ consider the following  subset of $\mat$: 
\[\mat_k:=\{A\in\mat\mid \deg(m_A)\le n-k\}\] 
where $m_A$ stands for the minimal polynomial of the matrix $A$. 
Clearly $\mat_0=\mat$, $\mat_k\supset\mat_{k+1}$, and for a fixed $k\in\{0,1,\dots,n-1\}$ we have the inclusions 
\[\begin{array}{ccc}(\mc^{n\times n})_k & \supset & \hermat(n)_k \\
\cup& & \cup\\
\symmat(n,\mc)_k  &\supset &\symmat(n,\mr)_k\end{array}\]
We have also the equalities $ \hermat(n)_k=(\mc^{n\times n})_k\cap \hermat(n)$ and  
$\symmat(n,\mr)_k=\symmat(n,\mc)_k\cap\symmat(n,\mr)=\hermat(n)_k\cap \mr^{n\times n}$.  
Obviously $\mat_k$ is the common zero locus in $\mat$ of the coordinate functions of the polynomial map 
\[\mathcal{P}_k:\mat\to \bigwedge^{n-k+1}\mat,\quad A\mapsto I_n\wedge A\wedge A^2\wedge\dots\wedge A^{n-k}\] 
where $I_n$ is the $n\times n$ identity matrix and $\bigwedge^l\mat$ is the $l$th exterior power of $\mat$. 
In particular, $\mat_k$ is an affine algebraic subvariety of the affine space $\mat$, and  
it is  natural to raise the following question: 

\begin{question}\label{question} 
Do the coordinate functions of the polynomial map $\mathcal{P}_k$ 
generate the vanishing ideal 
$\ideal(\mat_k)$ in $\field[\mat]$ of the affine algebraic subvariety $\mat_k\subset \mat$? 
\end{question} 

Above $\field[\mat]$ is the coordinate ring of $\mat$, so $\field=\mr$ in cases (a), (b) whereas $\field=\mc$ in cases (c), (d), and 
$\field[\mat]$ is a polynomial ring over $\field$ in  $\dim_{\field}(\mat)$ variables. 
Recall that  the {\it vanishing ideal} of $\mat_k$ is 
\[\ideal(\mat_k):=\{f\in\field[\mat]\mid f\big\vert_{\mat_k}\equiv 0\}\triangleleft\field[\mat]\] 
We have $\mat_0=\mat$, so $\ideal(\mat_0)$ is the zero ideal,  and  $\mathcal{P}_0$ is the zero map. 
From now on we focus on $\mat_{k+1}$ and $\ideal(\mat_{k+1})$ where $k=0,1,\dots,n-2$. 

Our original interest was in the real cases  (a) and (b):  then $\field=\mr$ and all $A\in\mat$ are diagonalizable with real eigenvalues, hence 
\begin{equation}\label{eq:mat_kreal}
\mat_{k+1}=\{A\in\mat\mid A\mbox{ has at most }n-k-1 \mbox{ distinct eigenvalues}\}
\end{equation}
It follows from \eqref{eq:mat_kreal} that in the real cases $\mat_{k+1}$ (for $k=0,1,\dots,n-2$) is the zero locus of a single polynomial $\sdisc_k\in\mr[\mat]$, 
defined by 
\[\sdisc_k(A):=\sum_{1\le i_1<\dots<i_{n-k}\le n}\prod_{1\le s<t\le n-k}(\lambda_{i_s}-\lambda_{i_t})^2\] 
where $\lambda_1,\dots,\lambda_n$ are the eigenvalues of $A$. Note that $\sdisc_k(A)$ coincides with the {\it $k$-subdiscriminant} of the characteristic polynomial of $A$ 
(we refer to Chapter 4 of  \cite{roy} for basic properties of subdiscriminants), and $\sdisc_k$ is a homogeneous polynomial function on $\mat$ of degree 
$(n-k)(n-k-1)$. In the special case $k=0$ we recover the {\it discriminant} $\disc=\sdisc_0$. 
The ideal $\ideal(\mat_{k+1})$ is generated by homogeneous elements (with respect to the standard grading on the polynomial ring 
$\field[\mat]=\bigoplus_{d=0}^{\infty}
\field[\mat]_d$. 
In \cite{domokos2} it was deduced from the Kleitman-Lov\'asz theorem (cf. Theorem 2.4 in \cite{lovasz}) that 
$\frac 12\deg(\sdisc_k)=\binom{n-k}{2}$ is the minimal degree of a non-zero homogeneous component of 
$\ideal(\mat_{k+1})=\bigoplus_{d=0}^{\infty}\ideal(\mat_k)_d$ (in fact 
\cite{domokos2} deals with the case $\mat=\symmat(n,\mr)$ only, but the proof of Corollary 5.3 in loc. cit. works also for the case 
$\mat=\hermat(n)$, see Proposition~\ref{prop:subdiscunique} and Theorem~\ref{thm:main4} (i) in the present paper). 
Since the polynomial map $\mathcal{P}_{k+1}$ is homogeneous of degree $\binom{n-k}{2}$, 
its coordinate functions are contained in  the  homogeneous component $\ideal(\mat_{k+1})_{\binom{n-k}{2}}$. 
So an affirmative answer to Question~\ref{question} would imply in particular that $\ideal(\mat_{k+1})$ is generated by its minimal degree non-zero homogeneous component.   

In Section~\ref{sec:zariski} we observe that the Zariski closure of $\hermat(n)_k$ in the complex affine space $\mc^{n\times n}$ is $(\mc^{n\times n})_k$, and the Zariski closure of $\symmat(n,\mr)_k$ in the complex affine space $\symmat(n,\mc)$ is $\symmat(n,\mc)_k$, see Proposition~\ref{prop:closure}. 
This implies the following: 

\begin{corollary}\label{cor:idealofclosure} 
Let $\mat$ be $\hermat(n)$ respectively $\symmat(n,\mr)$, and $\mc\otimes_{\mr}\mat$ its complexification $\mc^{n\times n}$ respectively $\symmat(n,\mc)$. 
We have the equality 
\[\ideal((\mc\otimes_{\mr}\mat)_k)=\mc\otimes_{\mr}\ideal(\mat_k) \] 
where we make the standard identification  $\mc[\mc\otimes_{\mr}\mat]=\mc\otimes_{\mr}\mr[\mat]$. 
\end{corollary}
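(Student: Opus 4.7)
The plan is to combine the Zariski-closure statement of Proposition~\ref{prop:closure} with the standard ``real and imaginary parts'' decomposition of polynomial functions on a complexified real vector space. Since Proposition~\ref{prop:closure} already supplies the geometric content, Corollary~\ref{cor:idealofclosure} should follow by pure formalism.

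First I would reduce the left-hand side. By Proposition~\ref{prop:closure}, $(\mc\otimes_{\mr}\mat)_k$ is the Zariski closure of $\mat_k\subset \mc\otimes_{\mr}\mat$, so from the general fact that the vanishing ideal of a subset of affine space coincides with the vanishing ideal of its Zariski closure, we obtain
\[ \ideal((\mc\otimes_{\mr}\mat)_k) = \{F\in\mc[\mc\otimes_{\mr}\mat]\mid F(A)=0 \text{ for all } A\in\mat_k\}. \]

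Next I would unpack the right-hand side via the identification $\mc[\mc\otimes_{\mr}\mat]=\mc\otimes_{\mr}\mr[\mat]$. Write an arbitrary $F$ under this identification as $F=f_1+if_2$ with $f_1,f_2\in\mr[\mat]$. The polynomials $f_1$ and $f_2$ take real values on $\mat$, so $F(A)=0$ for $A\in\mat_k\subset\mat$ forces $f_1(A)=f_2(A)=0$ separately. Thus $F$ vanishes on $\mat_k$ if and only if $f_1,f_2\in\ideal(\mat_k)$, which is the same as $F\in\mc\otimes_{\mr}\ideal(\mat_k)$. Combined with the first step this yields the asserted equality.

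I do not expect a genuine obstacle at the level of Corollary~\ref{cor:idealofclosure} itself: it is formal bookkeeping on top of Proposition~\ref{prop:closure}. The substantive work sits in that proposition, where one must show that every complex $n\times n$ matrix (resp.\ complex symmetric matrix) whose minimal polynomial has degree at most $n-k$ lies in the Zariski closure of the Hermitian (resp.\ real symmetric) matrices satisfying the same bound on the minimal polynomial.
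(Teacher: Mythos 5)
Your proposal is correct and is exactly the intended derivation: the paper offers no separate proof and simply states that Proposition~\ref{prop:closure} ``implies'' the corollary, the implication being precisely the formal argument you give (vanishing ideal of a set equals that of its Zariski closure, plus the real/imaginary-part decomposition $F=f_1+if_2$ with $f_1,f_2$ real-valued on $\mat$).
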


Note  that in the complex cases (c), (d) by the Hilbert Nullstellensatz the coordinate functions of $\mathcal{P}_k$ generate $\ideal(\mat_k)$ up to radical, hence 
by Corollary~\ref{cor:idealofclosure} this holds also in the real cases, giving some evidence for an affirmative answer to Question~\ref{question}:   

\begin{corollary}\label{cor:radical} 
The coordinate functions of $\mathcal{P}_k$ generate $\ideal(\mat_k)$ up to radical.  
\end{corollary}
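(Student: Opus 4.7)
The plan is to apply the Hilbert Nullstellensatz in the complex cases (c) and (d), then descend to the real cases (a) and (b) using Corollary~\ref{cor:idealofclosure}. In the complex cases, $\field = \mc$ is algebraically closed and $\mat_k$ is by definition the common zero locus in $\mat$ of the coordinate functions of $\mathcal{P}_k$; writing $J \triangleleft \mc[\mat]$ for the ideal they generate, the Nullstellensatz gives $\sqrt{J} = \ideal(\mat_k)$ immediately.

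For the real cases, let $\mat$ be $\hermat(n)$ or $\symmat(n,\mr)$, set $\mat_{\mc} := \mc \otimes_{\mr} \mat$, and let $J \triangleleft \mr[\mat]$ be the ideal generated by the coordinate functions of $\mathcal{P}_k$. Since powers of Hermitian (respectively real symmetric) matrices are of the same type, the relevant coordinate functions on $\bigwedge^{n-k+1}\mat$ can be chosen to be real-valued on $\mat$, so extending scalars yields $J_{\mc} := \mc \otimes_{\mr} J$, which is precisely the ideal in $\mc[\mat_{\mc}]$ generated by the same polynomials viewed over $\mc$. Combining the complex case applied to $\mat_{\mc}$ with Corollary~\ref{cor:idealofclosure} then gives
\[\sqrt{J_{\mc}} \;=\; \ideal((\mat_{\mc})_k) \;=\; \mc \otimes_{\mr} \ideal(\mat_k).\]

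The inclusion $\sqrt{J} \subseteq \ideal(\mat_k)$ is automatic since vanishing ideals are radical. For the reverse inclusion, take $f \in \ideal(\mat_k)$; the displayed equality yields $f^m \in J_{\mc}$ for some $m \ge 1$. Now $J_{\mc} = J \oplus iJ$ as an $\mr$-vector space, precisely because the generators of $J_{\mc}$ already lie in $\mr[\mat]$. Since $f^m$ is real its imaginary component in this decomposition vanishes, so $f^m \in J$ and $f \in \sqrt{J}$.

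The only step requiring care is this descent from $\mc$ to $\mr$: it relies on the coordinate functions of $\mathcal{P}_k$ being genuinely defined over $\mr$ on the real part $\mat$ (so that $J_{\mc}$ is the honest complexification of $J$), together with Corollary~\ref{cor:idealofclosure} which identifies the vanishing ideal of the Zariski closure with the complexification of the real vanishing ideal. Neither ingredient is difficult in itself, which is consistent with this being stated as a corollary rather than a theorem.
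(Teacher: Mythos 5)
Your proposal is correct and follows exactly the route the paper takes: Nullstellensatz in the complex cases (c), (d), then descent to the real cases (a), (b) via Corollary~\ref{cor:idealofclosure}. The paper leaves the descent step implicit, and your care in checking that $J_{\mc} = J \oplus iJ$ (because $\mathcal{P}_k$ has real coordinate functions on $\mat$, so $J$ is generated by elements of $\mr[\mat]$) is exactly the point that makes the implication go through.
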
 

The answer to Question~\ref{question} is trivially yes for $k=0$, $n$ arbitrary, and it is straighforward to check that the answer is yes for $k=n-1$, $n$ arbitrary (since $\mat_{n-1}$ 
consists of scalar matrices, so it is a linear subspace of $\mat$, and thus its ideal is generated by linear polynomials). 
The smallest interesting case therefore is $n=3$ and $k=1$. The results of the present paper show in particular that the answer to Question~\ref{question} is yes also in this case, see Corollary~\ref{cor:main2} (i) and (iv).  

%\begin{theorem}\label{thm:main1} 
%For $n=3$  the ideal $\ideal(\mat_1)$ is generated by the coordinate functions of $\mathcal{P}_1:\mat\to\bigwedge^3\mat$, 
%$A\mapsto I_3\wedge A\wedge A^2$. 
%\end{theorem} 

By Corollary~\ref{cor:idealofclosure} it is sufficient to deal with the complex cases (c), (d). Then there is an action of a semisimple complex linear algebraic group $G$ on $\mat$. Namely $G$ is the {\it complex special linear group} $SL(n,\mc)$ in case (c) and the {\it complex special orthogonal group} $SO(n,\mc)$ in case (d), acting by conjugation. 
In Section~\ref{sec:general} we recall the notion of {\it covariants} and their relation to the algebra $\mc[\mat]^U$ of $U$-invariants on $\mat$ (where $U$ is a maximal unipotent subgroup in $G$), and formulate Lemma~\ref{lemma:logic} underlying our strategy to transfer information on relations between basic covariants to 
give the ideal of $G$-stable subsets in $\mat$. Generators of the algebra of covariants on $\mat=\mc^{3\times 3}$ were determined by Tange \cite{tange}. 
In Section~\ref{sec:tange} we recall this result (and provide a natural interpretation of the generators). It turns out that the algebra of $U$-invariants on 
$\mat_1$ is isomorphic to a monomial subring of the three-variable polynomial ring, hence it is easy to determine its presentation, see Theorem~\ref{thm:covsonF}. 
From this we deduce Corollary~\ref{cor:main2}, describing a minimal generating system of $\ideal(\mat_1)$ as well as the $G$-module structure of the minimal degree non-zero homogeneous component of $\ideal(\mat_1)$. Moreover, in Section~\ref{sec:hilbseries} we derive the formal character of the  $G$-module $\mc[\mat_1]$, in particular, we compute the Hilbert series of the coordinate ring of $\mat_1$ as a rational function, see Corollary~\ref{cor:hilbertseries}. 
In Section~\ref{sec:symmetric} we show how the same method yields similar results (Corollary~\ref{cor:main3} and Corollary~\ref{cor:hilbseriessym})  for case (d): 
here the algebra of $U$-invariants on $\mat$ can be obtained from classial results on covariants of binary quartic forms.  
Finally, in Sections~\ref{sec:real} and \ref{sec:subdischer} we generalize some of the constructions of Section~\ref{sec:tange}  to arbitrary $n$. We derive some partial information on $\ideal((\mc^{n\times n})_k)$ for arbitrary $n$ and $k$,  and extend the results in 
\cite{domokos2}  on sum of squares presentations of subdiscriminants of real symmetric matrices to the case of $n\times n$ Hermitian matrices.

%%%%%%%%%%%%%%%%%%%%%%%%%%%%%%%%%%%%%

\section{Complex Zariski closure of the set of degenerate Hermitian matrices}\label{sec:zariski}  

The special linear group $SL(n,\mc)$ acts on $\mc^{n\times n}$ by conjugation. Two matrices in $\mc^{n\times n}$ are {\it similar} if they belong to the same $SL(n,\mc)$-orbit. 
A matrix in $\mc^{n\times n}$ is {\it diagonalizable} if it is similar to a diagonal matrix. It is well known that the subset of diagonalizable matrices is Zariski dense in $\mc^{n\times n}$. We need the following refinement: 

\begin{proposition}\label{prop:zariskidense1} 
The diagonalizable elements constitute a Zariski dense subset in $(\mc^{n\times n})_k$ for $k=0,1,\dots,n-1$. 
\end{proposition}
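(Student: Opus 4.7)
The plan is to prove that every $A\in(\mc^{n\times n})_k$ lies in the Euclidean closure of the set of diagonalizable matrices belonging to $(\mc^{n\times n})_k$. Since the Euclidean closure is contained in the Zariski closure, this will establish the proposition.

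Conjugation by $GL(n,\mc)$ preserves both $(\mc^{n\times n})_k$ and the property of being diagonalizable, so without loss of generality $A$ is already in Jordan canonical form, say $A=\bigoplus_{i=1}^{r}\bigoplus_{j=1}^{\ell_i}J_{s_{ij}}(\lambda_i)$, with pairwise distinct $\lambda_i$ and Jordan block sizes $s_{i1}\ge s_{i2}\ge\cdots\ge s_{i\ell_i}$ attached to $\lambda_i$. The hypothesis $\deg(m_A)\le n-k$ then reads $\sum_{i=1}^r s_{i1}\le n-k$. The construction is: for each $i$ choose pairwise distinct small complex parameters $\epsilon_{i,1},\dots,\epsilon_{i,s_{i1}}$, arranged globally so that the values $\lambda_i+\epsilon_{i,t}$ (ranging over all $i$ and $t$) are all distinct; then replace each Jordan block $J_{s_{ij}}(\lambda_i)$ by the upper triangular matrix with superdiagonal entries equal to $1$ and diagonal entries $\lambda_i+\epsilon_{i,1},\dots,\lambda_i+\epsilon_{i,s_{ij}}$, and let $A_\epsilon$ be the resulting block-diagonal matrix.

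Each modified block is upper triangular with pairwise distinct diagonal entries, hence has a full set of distinct eigenvalues and is therefore diagonalizable; consequently $A_\epsilon$ itself is diagonalizable. Its set of distinct eigenvalues is precisely $\{\lambda_i+\epsilon_{i,t}:1\le i\le r,\ 1\le t\le s_{i1}\}$, which has cardinality $\sum_i s_{i1}\le n-k$, so $A_\epsilon\in(\mc^{n\times n})_k$. As all the $\epsilon_{i,t}$ tend to $0$, clearly $A_\epsilon\to A$ in the Euclidean topology.

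The point requiring care, and the main place a naive attempt can break, is keeping the total number of distinct eigenvalues at most $n-k$ throughout the perturbation. Reusing the same list of $s_{i1}$ perturbation values for every Jordan block attached to a common eigenvalue $\lambda_i$ is precisely what caps the contribution of $\lambda_i$ at $s_{i1}$ rather than the larger quantity $\sum_j s_{ij}$. A less careful perturbation that assigns an independent shift to each diagonal entry would immediately leave $(\mc^{n\times n})_k$ whenever two or more Jordan blocks sit at the same eigenvalue; apart from this bookkeeping, the remainder of the argument is routine.
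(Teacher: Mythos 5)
Your proof is correct, and it achieves the same end result (Euclidean, hence Zariski, density) as the paper's, but the route is genuinely different. The paper argues by descending induction on the number of distinct eigenvalues: it picks a single eigenvalue $\lambda$ whose root factor $x-\lambda$ appears in $m_A$ with multiplicity $r\ge 2$, perturbs only the $(1,1)$-entry of every Jordan block $J_r(\lambda)$ of that maximal size to $\lambda+\varepsilon$, checks $m_{A_\varepsilon}=\frac{x-\lambda-\varepsilon}{x-\lambda}\,m_A$ so $\deg(m_{A_\varepsilon})$ is unchanged, and repeats. You instead perform a single coordinated perturbation of \emph{all} diagonal entries at once, spreading each eigenvalue $\lambda_i$ into $s_{i1}$ nearby values and reusing the same shifts $\epsilon_{i,1},\dots,\epsilon_{i,s_{i1}}$ across every block attached to $\lambda_i$; the identity $\deg(m_A)=\sum_i s_{i1}$ then shows the perturbed matrix is a diagonalizable element of $(\mc^{n\times n})_k$ in one step. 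Your version dispenses with the induction and lands directly on a diagonalizable matrix, which is slightly slicker; the paper's per-eigenvalue perturbation requires a more delicate bookkeeping at each stage but has the virtue that each step is a one-parameter polynomial curve $\varepsilon\mapsto A_\varepsilon$ inside $(\mc^{n\times n})_k$, which gives Zariski density immediately without appealing to the inclusion of the Euclidean closure in the Zariski closure (though that inclusion is of course valid, and your use of it is fine). Both approaches also yield the Euclidean-approximation statement recorded in Remark~\ref{remark:euclidean}.
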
 

\begin{proof} Note that any matrix in $(\mc^{n\times n})_k$ has at most $n-k$ distinct eigenvalues. If $A\in(\mc^{n\times n})_k$ has exactly $n-k$ distinct eigenvalues, then 
$m_A$ has no multiple roots, hence $A$ is diagonalizable. Now suppose that $A\in(\mc^{n\times n})_k$ is not diagonalizable, hence in particular it has  strictly less than $n-k$ distinct eigenvalues. Moreover, for some eigenvalue $\lambda$ of $A$ the root factor $x-\lambda$ has multiplicity $r \ge 2$ in the minimal polynomial $m_A$. 
We shall construct a polynomial map $\mc \to (\mc^{n\times n})_k$, $\varepsilon\mapsto A_{\varepsilon}$ such that $A_0=A$ and for all but finitely many $\varepsilon$ the matrix 
$A_{\varepsilon}$ has more eigenvalues than $A$. Since $m_A$ and the set of eigenvalues of $A$ is an invariant of the $SL(n,\mc)$-orbit of $A$, we may assume that $A$ is in Jordan normal form. By assumption on the minimal polynomial of $A$, it has a Jordan block $J_r(\lambda)$. In each such Jordan block of $A$ replace the $(1,1)$-entry by $\lambda+\varepsilon$; the resulting matrix is $A_{\varepsilon}$. 
When $\lambda+\varepsilon$ is not an eigenvalues of $A$, we have $m_{A_{\varepsilon}}=\frac{x-\lambda-\varepsilon}{x-\lambda}m_A$, 
so $A_{\varepsilon}\in(\mc^{n\times n})_k$, and  $A_{\varepsilon}$ has one more eigenvalues than $A$. 
Consequently $A$ is contained in the Zariski closure of the subset of those elements in 
$(\mc^{n\times n})_k$ that have more eigenvalues than $A$. By a descending induction on the number of distinct eigenvalues of $A$ one deduces the statement. 
 \end{proof}  

The complex orthogonal group 
\[O(n,\mc)=\{A\in\mc^{n\times n}\mid AA^T=I_n\}\]
acts by conjugation on $\mc^{n\times n}$, and $\symmat(n,\mc)$ is an invariant subspace. 
Two matrices are {\it orthogonally similar} if they belong to the same $O(n,\mc)$-orbit.  A matrix $B$ is {\it orthogonally diagonalizable} if 
it is orthogonally similar to a diagonal matrix (this forces $B\in\symmat(n,\mc)$). It is easy to see that the $O(n,\mc)$-orbit of a diagonal matrix coincides with its  
$SO(n,\mc)$-orbit, where 
\[SO(n,\mc)=\{A\in O(n,\mc)\mid \det(A)=1\}\]
is the {\it special orthogonal group}.  
 
\begin{proposition}\label{prop:zariskidense2} 
The orthogonally diagonalizable elements constitute a Zariski dense subset in $\symmat(n,\mc)_k$  for $k=0,1,\dots,n-1$. 
\end{proposition}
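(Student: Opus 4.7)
The proof parallels that of Proposition~\ref{prop:zariskidense1}: given $A\in\symmat(n,\mc)_k$ not orthogonally diagonalizable, the goal is to produce a polynomial family $\varepsilon\mapsto A_\varepsilon$ in $\symmat(n,\mc)_k$ with $A_0=A$ such that $A_\varepsilon$ has strictly more distinct eigenvalues for all but finitely many $\varepsilon$; the statement then follows by descending induction on the number of distinct eigenvalues.

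First I would observe that if $A$ has exactly $n-k$ distinct eigenvalues then $m_A$ has no multiple roots, so $A$ is diagonalizable; and any diagonalizable complex symmetric matrix is in fact orthogonally diagonalizable. Indeed, eigenvectors of a symmetric $A$ for distinct eigenvalues are orthogonal under the symmetric form $(v,w)\mapsto v^Tw$, so this form restricts non-degenerately to each eigenspace, whence each eigenspace admits an orthogonal basis. If instead $A$ is not orthogonally diagonalizable, then $A$ is not diagonalizable and some eigenvalue $\lambda$ appears in $m_A$ with multiplicity $r\ge 2$.

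To produce $A_\varepsilon$ I would invoke the classical normal form for complex symmetric matrices under $O(n,\mc)$-conjugation (two such matrices are $O(n,\mc)$-conjugate iff they have the same Jordan type): every complex symmetric matrix is orthogonally similar to a block-diagonal matrix $\bigoplus_i\Gamma_{r_i}(\lambda_i)$, where $\Gamma_r(\lambda)$ denotes a fixed symmetric $r\times r$ matrix with a single Jordan block $J_r(\lambda)$. Since orthogonal similarity preserves both symmetry and membership in $\symmat(n,\mc)_k$, I may assume $A$ is already in this normal form. I would then modify simultaneously every block of $A$ of the form $\Gamma_r(\lambda)$ (i.e.\ with eigenvalue $\lambda$ and size equal to the maximal Jordan block size $r$ of $\lambda$ in $A$), replacing each by a common symmetric perturbation $C_\varepsilon$ satisfying $C_0=\Gamma_r(\lambda)$ and having Jordan type $J_{r-1}(\lambda)\oplus J_1(\mu_\varepsilon)$ with $\mu_\varepsilon\ne\lambda$ for generic $\varepsilon$. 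With this choice, $m_{A_\varepsilon}=\frac{x-\mu_\varepsilon}{x-\lambda}m_A$ (provided $\mu_\varepsilon$ avoids the other eigenvalues of $A$), so $\deg m_{A_\varepsilon}=\deg m_A\le n-k$ and $A_\varepsilon$ has exactly one more distinct eigenvalue than $A$.

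The main obstacle is the existence of the symmetric perturbation $C_\varepsilon$. The naive linear perturbation $\Gamma_r(\lambda)+\varepsilon E_{11}$ typically produces a matrix whose eigenvalues all drift away from $\lambda$, which would force $\deg m_{A_\varepsilon}>n-k$ when $\lambda$ also appears in smaller Jordan blocks of $A$; one actually needs $C_\varepsilon$ to retain $\lambda$ as an exact eigenvalue while lowering its Jordan block size from $r$ to $r-1$. For $r=2$, writing $\Gamma_2(\lambda)-\lambda I=v_0v_0^T$ for a (necessarily isotropic) vector $v_0$, one may take $C_\varepsilon=\lambda I+(v_0+\varepsilon w)(v_0+\varepsilon w)^T$ for any $w$ with $v_0^Tw\ne 0$; this is manifestly symmetric and for generic $\varepsilon$ has eigenvalues $\lambda$ and $\lambda+(v_0+\varepsilon w)^T(v_0+\varepsilon w)\ne\lambda$. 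For general $r$, existence of $C_\varepsilon$ is the symmetric counterpart of the standard non-symmetric degeneration $\left(\begin{smallmatrix}J_{r-1}(\lambda)& e_{r-1}\\ 0&\mu\end{smallmatrix}\right)\to J_r(\lambda)$ as $\mu\to\lambda$; one way to obtain it is to transport this non-symmetric curve into $\symmat(r,\mc)$ via a polynomially varying conjugation, available because the classical normal form is effected by an algebraic procedure.
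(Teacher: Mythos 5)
Your strategy essentially mirrors the paper's: pass to the $O(n,\mc)$-normal form for complex symmetric matrices, perturb each maximal-size $\lambda$-block to split off a new eigenvalue without increasing $\deg m_A$, and induct on the number of distinct eigenvalues. Your self-contained argument that a diagonalizable complex symmetric matrix is orthogonally diagonalizable (non-degeneracy of $(v,w)\mapsto v^Tw$ on each eigenspace) is a clean substitute for the Horn--Johnson citation the paper invokes.

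There is, however, one genuine gap: the existence, for all $r\ge 3$, of a polynomial family $\varepsilon\mapsto C_\varepsilon\in\symmat(r,\mc)$ with $C_0=\Gamma_r(\lambda)$ and Jordan type $J_{r-1}(\lambda)\oplus J_1(\mu_\varepsilon)$ for generic $\varepsilon$. Your $r=2$ construction is complete, but the ``transport via a polynomially varying conjugation'' for general $r$ is not substantiated. The conjugation bringing the non-symmetric matrix with top-left block $J_{r-1}(\lambda)$, last column $e_{r-1}$, and bottom-right entry $\mu$ into symmetric normal form necessarily degenerates as $\mu\to\lambda$ (the eigenvector for $\mu$ has components of order $(\mu-\lambda)^{-(r-1)}$), and it is not automatic that the conjugate extends polynomially across $\mu=\lambda$ with the correct limit $\Gamma_r(\lambda)$. ``The normal form is effected by an algebraic procedure'' does not resolve this, precisely because the Jordan type, and hence the procedure, jumps at $\mu=\lambda$.

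The gap is readily filled by an explicit perturbation in the spirit of the paper's $S_{r,\varepsilon}(\lambda)$. Writing $S_r(\lambda)=B_rJ_r(\lambda)\bar B_r$ and $R:=\sum_{s+t=r+1}E_{s,t}$, one checks using $B_r\bar B_r=I_r$, $B_r^2=iR$ that $B_rX\bar B_r$ is symmetric iff $X=RX^TR$. Now set $Y_\varepsilon:=\varepsilon E_{m+1,m+1}$ when $r=2m+1$, and $Y_\varepsilon:=\varepsilon(E_{m,m}+E_{m+1,m+1})+\varepsilon^2E_{m+1,m}$ when $r=2m$. In each case $Y_\varepsilon=RY_\varepsilon^TR$, and a direct computation shows $J_r(\lambda)+Y_\varepsilon$ has characteristic polynomial $(x-\lambda)^{r-1}(x-\lambda-\varepsilon)$ (odd $r$), respectively $(x-\lambda)^{r-1}(x-\lambda-2\varepsilon)$ (even $r$), with $\lambda$ of geometric multiplicity one. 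Thus $C_\varepsilon:=B_r(J_r(\lambda)+Y_\varepsilon)\bar B_r$ has the required Jordan type and $C_0=S_r(\lambda)$.

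Two remarks on the comparison with the paper. First, the paper's perturbation $S_{r,\varepsilon}(\lambda)$ splits off \emph{two} eigenvalues $\lambda\pm\varepsilon$, reducing the power of $(x-\lambda)$ in that block's minimal polynomial to $r-2$; to keep $\deg m_{A_\varepsilon}\le\deg m_A$ the paper therefore replaces \emph{every} $\lambda$-block $S_\ell(\lambda)$ (of every size $\ell$, including $\ell=1$) by $S_{\ell,\varepsilon}(\lambda)$. Your perturbation keeps the power of $(x-\lambda)$ at $r-1$, which dominates every unmodified $\lambda$-block, so modifying only the blocks of maximal size already gives $m_{A_\varepsilon}=\frac{x-\mu_\varepsilon}{x-\lambda}m_A$; this is a real simplification of the bookkeeping. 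Second, with this fix your proof is correct and a valid alternative to the paper's.
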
 

\begin{proof} If $A\in\symmat(n,\mc)_k$ has $n-k$ distinct eigenvalues, then it is diagonalizable, hence by Theorem 4.4.13 in \cite{horn1} it is orthogonally diagonalizable. 
Thus by  induction on the number of distinct eigenvalues, it suffices to prove that if $A\in\symmat(n,\mc)_k$ has less than $n-k$ eigenvalues and is not diagonalizable, then it is contained in the Zariski closure of the subset of $\symmat(n,\mc)_k$ consisting of matrices having more eigenvalues than $A$. Since the action of $O(n,\mc)$ on $\symmat(n,\mc)$ preserves both the minimal polynomial and the number of eigenvalues of a matrix, in order to prove this claim it is sufficient to deal with $A$ taken from a particular set of $O(n,\mc)$-orbit representatives in $\symmat(n,\mc)$. 
Any matrix in $\mc^{n\times n}$ is similar to a symmetric matrix  (see Theorem 4.4.9 in \cite{horn1}), and if two symmetric matrices are similar, then they are orthogonally similar (see Corollary 6.4.19 in \cite{horn2}).  We recall from \cite{horn1} an explicit symmetric matrix in the similarity class of a Jordan block $J_r(\lambda)$. 
Denoting by $E_{i,j}$ the matrix unit with $(i.j)$-entry $1$ and zeroes everywhere else, we have $J_r(\lambda)=\lambda I_r+N_r$ where 
$N_r:=\sum_{j=1}^{r-1}E_{j,j+1}$ is the nilpotent Jordan block. Define $B_r:=\frac{1}{\sqrt 2}(I_r+i\sum_{s+t=r+1}E_{s,t})$ where $i$ is the imaginary complex unit with $i^2=-1$. 
We have $B_r\bar B_r=I_r$ and 
$B_rE_{s,t}\bar B_r=\frac 12(E_{s,t}+E_{r+1-s,r+1-t}+iE_{r+1-s,t}-iE_{s,r+1-t})$. 
This shows that $S_r(\lambda):=B_rJ_r(\lambda)\bar B_r$ is symmetric. 
For $\varepsilon\in\mc$ set 
\[S_{r,\varepsilon}(\lambda):=\begin{cases}  B_r(J_r(\lambda)-\varepsilon^2E_{m+1,m})\bar B_r,\mbox{ when }r=2m\\
B_r(J_r(\lambda)+\frac{\varepsilon^2}{2}(E_{m+1,m}+E_{m+2,m+1}))\bar B_r\mbox{ when }1<r=2m+1\\
S_1(\lambda+\varepsilon)\mbox{ when }r=1 \end{cases}\]
Then $S_{r,\varepsilon}(\lambda)$ is symmetric, and for $r>1$ its charateristic polynomial is 
$k_{S_{r,\varepsilon}(\lambda)}=(x-\lambda-\varepsilon)(x-\lambda+\varepsilon)(x-\lambda)^{r-2}
=\frac{(x-\lambda-\varepsilon)(x-\lambda+\varepsilon)}{(x-\lambda)^2}k_{S_r(\lambda)}$.  
 Assume now that $A\in \symmat(n,\mc)_k$ is not diagonalizable, and is block diagonal, with diagonal blocks of the form $S_l(\mu)$ with various $\mu\in\mc$ and $l\in\mn$. 
By assumption the minimal polynomial $m_A$ has a root factor $x-\lambda$ with multiplicity at least  $2$. 
Take for $A_{\varepsilon}$ the matrix obtained by replacing each block $S_r(\lambda)$ in $A$ by $S_{r,\varepsilon}(\lambda)$. 
Then $m_{A_{\varepsilon}}$ divides $\frac{(x-\lambda-\varepsilon)(x-\lambda+\varepsilon)}{(x-\lambda)^2}m_A$, so 
$A_{\varepsilon}\in\symmat(n,\mc)_k$.  Moreover, when none of $\lambda+\varepsilon$ and $\lambda-\varepsilon$ is an  eigenvalue of $A$, then $A_{\varepsilon}$ has one or two more eigenvalues than $A$. 
This shows that $A$ is contained in the Zariski closure of the subset of $\symmat(n,\mc)_k$ consisting of matrices with more eigenvalues than $A$. 
So our claim is proved. 
\end{proof} 

\begin{remark}\label{remark:euclidean} 
The proofs of Propositions~\ref{prop:zariskidense1} and \ref{prop:zariskidense2} show that for any $A\in(\mc^{n\times n})_k$ (respectively $A\in\symmat(n,\mc)_k$) there are diagonalizable (respectively orthogonally diagonalizable) elements in $(\mc^{n\times n})_k$ (respectively $\symmat(n,\mc)_k$) arbitrarily close to $A$ with respect to the euclidean metric. 
\end{remark} 

\begin{proposition}\label{prop:closure} \begin{enumerate}
\item[(i)] The Zariski closure of $\hermat(n)_k$ in the complex affine space $\mc^{n\times n}$ is $(\mc^{n\times n})_k$. 
\item[(ii)] The Zariski closure of $\symmat(n,\mr)_k$ in the complex affine space $\symmat(n,\mc)$ is $\symmat(n,\mc)_k$. 
\end{enumerate}
\end{proposition}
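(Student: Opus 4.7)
First, the easy inclusions $\overline{\hermat(n)_k}^{\mathrm{Zar}} \subseteq (\mc^{n\times n})_k$ and $\overline{\symmat(n,\mr)_k}^{\mathrm{Zar}} \subseteq \symmat(n,\mc)_k$ are free of charge: in each case the larger set is already Zariski closed, being the common zero locus of the coordinates of $\mathcal{P}_k$, and contains the smaller one. For the reverse inclusions the plan is to invoke Propositions~\ref{prop:zariskidense1} and~\ref{prop:zariskidense2}, which reduce the problem to showing that every (orthogonally) diagonalizable matrix in $(\mc^{n\times n})_k$ (respectively in $\symmat(n,\mc)_k$) already lies in the Zariski closure of $\hermat(n)_k$ (respectively of $\symmat(n,\mr)_k$).

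My strategy for (i) is to exhibit each such diagonalizable $A$ as a value of a polynomial map whose ``compact real form'' lands automatically in $\hermat(n)_k$. If the distinct eigenvalues of $A$ have multiplicities $n_1,\ldots,n_r$ with $r\le n-k$, then $A=Q_0\,\mathrm{diag}(\mu_1 I_{n_1},\ldots,\mu_r I_{n_r})\,Q_0^{-1}$ for some $Q_0\in GL(n,\mc)$, and I would consider the morphism
\[
\psi\colon GL(n,\mc)\times\mc^r\;\longrightarrow\;(\mc^{n\times n})_k,\qquad (Q,\lambda)\;\longmapsto\;Q\,\mathrm{diag}(\lambda_1 I_{n_1},\ldots,\lambda_r I_{n_r})\,Q^{-1}.
\]
By construction $A\in\psi(GL(n,\mc)\times\mc^r)$, while for $(Q,\lambda)\in U(n)\times\mr^r$ the matrix $\psi(Q,\lambda)$ is a unitary conjugate of a real diagonal matrix, hence Hermitian with minimal polynomial of degree at most $r\le n-k$; so $\psi(U(n)\times\mr^r)\subseteq\hermat(n)_k$. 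The decisive ingredient is then that $U(n)\times\mr^r$ is Zariski dense in the irreducible variety $GL(n,\mc)\times\mc^r$: by Weyl's unitarian trick $\mathfrak{u}(n)\otimes_\mr\mc=\mathfrak{gl}(n,\mc)$, which forces the Zariski closure of the subgroup $U(n)$ in $GL(n,\mc)$ to be of full complex dimension and hence to coincide with the irreducible $GL(n,\mc)$. Zariski density transfers through any morphism to the image, so $A\in\overline{\psi(GL(n,\mc)\times\mc^r)}^{\mathrm{Zar}}=\overline{\psi(U(n)\times\mr^r)}^{\mathrm{Zar}}\subseteq\overline{\hermat(n)_k}^{\mathrm{Zar}}$, which is what (i) claims.

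Part (ii) goes through by the identical scheme, with $GL(n,\mc)$ replaced by $SO(n,\mc)$ and $U(n)$ replaced by the compact real form $SO(n,\mr)$, invoking Proposition~\ref{prop:zariskidense2} in place of Proposition~\ref{prop:zariskidense1} and using the observation already recorded in this section that the $O(n,\mc)$-orbit of a diagonal matrix coincides with its $SO(n,\mc)$-orbit (so the conjugating matrix may indeed be chosen from $SO(n,\mc)$). The Zariski density of $SO(n,\mr)$ in $SO(n,\mc)$ is established by the same unitarian-trick calculation using $\mathfrak{so}(n,\mr)\otimes_\mr\mc=\mathfrak{so}(n,\mc)$. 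I do not foresee any genuine obstacle: the one step requiring a little care is the standard density of a compact real form in its complexification; once that and the density statements of Section~\ref{sec:zariski} are in hand, the remainder of the argument is purely formal.
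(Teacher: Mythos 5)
Your argument is correct and rests on the same two ingredients as the paper's proof, namely Propositions~\ref{prop:zariskidense1}--\ref{prop:zariskidense2} together with the Zariski density of a compact real form in its complexification; but the two proofs package these ingredients differently. The paper first observes that the Zariski closure of $\hermat(n)_k$ is $SL(n,\mc)$-stable (because $\hermat(n)_k$ is $SU(n)$-stable and $SU(n)$ is Zariski dense in $SL(n,\mc)$), which by Proposition~\ref{prop:zariskidense1} reduces the claim to showing that the closure contains the set $X$ of complex diagonal matrices with at most $n-k$ distinct diagonal entries; this is then handled by noting that each irreducible component of $X$ is an $(n-k)$-dimensional $\mc$-linear subspace spanned by its intersection with the real diagonal matrices, and these real diagonal matrices lie in $\hermat(n)_k$. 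You instead fold the ``group direction'' and the ``eigenvalue direction'' into a single morphism $\psi\colon GL(n,\mc)\times\mc^r\to(\mc^{n\times n})_k$ and invoke once the elementary fact that a morphism of varieties carries a Zariski dense subset of its source onto a Zariski dense subset of its image. Your version is arguably a bit cleaner because it avoids the explicit decomposition of $X$ into components; the paper's version keeps the residual step entirely elementary (linear algebra on diagonal matrices) once the reduction via group-stability has been made. Part (ii) is analogous in both treatments. One small remark: $\psi$ involves $Q\mapsto Q^{-1}$, so it is a morphism of affine varieties on $GL(n,\mc)$ (where $1/\det$ is regular) rather than a polynomial map on all of $\mc^{n\times n}$; this is exactly what the density-transfer argument needs, so no harm is done, but it is worth being precise about.
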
 

\begin{proof} (i) The {\it special unitary group} 
\[SU(n):=\{A\in\mc^{n\times n}\mid A\bar A^T=I_n,\quad \det(A)=1\}\] 
is Zariski dense in the complex linear algebraic group $SL(n,\mc)$. 
Note that the subset $\hermat(n)_k$ in $\mc^{n\times n}$ is $SU(n)$-stable, hence its Zariski closure is $SL(n,\mc)$-stable. 
Therefore by Proposition~\ref{prop:zariskidense1} it is sufficient to show that the Zariski closure of $\hermat(n)_k$ contains the set  $X$ of all complex 
diagonal matrices with at most $n-k$ distinct diagonal entries.  Let $L$ be an  irreducible component of $X$. Then  $L$ is an $n-k$-dimensional linear  subspace, 
spanned by its intersection with the space $D$ of  real diagonal matrices. Now $L\cap D\subset \hermat(n)_k$, and the Zariski closure of the real linear subspace $L\cap D$ is obviously its  $\mc$-linear span $L$. Thus $X$ is contained in the Zariski closure of $\hermat(n)_k$. 

The proof of (ii) is similar: the real special orthogonal group $SO(n)$ is Zariski dense in $SO(n,\mc)$, hence the Zariski closure of $\symmat(n,\mr)_k$ is $SO(n,\mc)$-stable. 
Now use Proposition~\ref{prop:zariskidense2} and conclude in the same way as above.  
\end{proof}

%%%%%%%%%%%%%%%%%%%%%%%%%

\section{Covariants and $G$-stable ideals}\label{sec:general} 

Let $G$ be a connected reductive linear algebraic group over the base field $\mc$ (like $SL_n(\mc)$ or $SO(n,\mc)$). 
Fix a maximal unipotent subgroup $U$ in $G$, and a maximal torus $T$ in $G$ normalizing $U$.  
We need to recall some basic facts from highest weight theory (cf. e.g. \cite{fulton-harris}, \cite{goodman-wallach}, \cite{procesi}): by a {\it $G$-module} 
we mean a rational $G$-module. Any $G$-module $V$ is spanned by $T$-eigenvectors. A (non-zero) $T$-eigenvector $v$ is called a {\it weight vector}, and the 
character $\lambda:T\to\mc^\times$ given by $t\cdot v=\lambda(t)v$ is called its weight. A $U$-invariant weight vector is called a {\it highest weight vector}. 
A highest weight vector generates an irreducible $G$ submodule. Moreover, an irreducible $G$-module contains a unique (up to scalar multiples) highest weight vector. 

Our proof of Corollary~\ref{cor:main2} and \ref{cor:main3} is based on the following general observation. 
Let $M$ be an affine $G$-variety with coordinate ring $\mc[M]$. It is a $G$-module via $(g\cdot f)(x)=f(g^{-1}x)$ for $g\in G$, $f\in\mc[M]$, $x\in M$. 
The algebra $\mc[M]^U$ of $U$-invariant polynomial functions on $M$ is finitely generated by \cite{hadziev} (see Theorem 9.4 in \cite{grosshans} or \cite{donkin}). Let $u_1,\dots,u_r$ be generators of the algebra $\mc[M]^U$. 

\begin{lemma}\label{lemma:logic} For any Zariski closed $G$-stable subset $X$ in $M$, the vanishing ideal $\ideal(X)$ is generated as a $G$-stable ideal in 
$\mc[M]$ by 
$f_j(u_1,\dots,u_r)$, $j=1,\dots,m$, where $f_1,\dots,f_m$ generate as an ideal in the $r$-variable  polynomial ring  the kernel  of the $\mc$-algebra  homomorphism 
$\varphi:\mc[x_1,\dots.x_r]\to\mc[X]^U$ given by  $x_i\mapsto u_i\vert_{X}$ (the restriction  of $u_i$ to $X$), $i=1,\dots,r$. 
\end{lemma}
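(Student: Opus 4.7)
The plan is to verify both inclusions between $\ideal(X)$ and the $G$-stable ideal $J \triangleleft \mc[M]$ generated by the elements $h_j := f_j(u_1,\dots,u_r)$, $j=1,\dots,m$. The inclusion $J \subseteq \ideal(X)$ is immediate: $\varphi(f_j)=0$ says that $h_j$ restricts to $0$ on $X$, so $h_j\in\ideal(X)$, and $\ideal(X)$ is $G$-stable by the $G$-stability of $X$.

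For the reverse inclusion, my plan rests on the exactness of the functor $V\mapsto V^U$ on the category of rational $G$-modules, a consequence of complete reducibility for the reductive group $G$; this same complete reducibility also ensures that any nonzero rational $G$-module has nonzero $U$-invariants (a highest weight vector). Applying $(-)^U$ to the short exact sequence $0\to J\to \ideal(X)\to \ideal(X)/J\to 0$ gives $(\ideal(X)/J)^U=\ideal(X)^U/J^U$, and the vanishing of this quotient forces $\ideal(X)=J$. So the problem reduces to showing $\ideal(X)^U\subseteq J^U$.

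To describe $\ideal(X)^U$ explicitly, apply the same exactness to $0\to \ideal(X)\to \mc[M]\to \mc[X]\to 0$: the restriction $\mc[M]^U\twoheadrightarrow \mc[X]^U$ is surjective with kernel $\ideal(X)^U$. The surjective $\mc$-algebra map $\psi:\mc[x_1,\dots,x_r]\twoheadrightarrow \mc[M]^U$, $x_i\mapsto u_i$, composes with this restriction to give $\varphi$. Hence $\psi^{-1}(\ideal(X)^U)=\ker\varphi=(f_1,\dots,f_m)$ as an ideal in $\mc[x_1,\dots,x_r]$, and surjectivity of $\psi$ yields that $\ideal(X)^U$ coincides with the $\mc[M]^U$-ideal generated by $h_1,\dots,h_m$. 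Each product $p h_j$ with $p\in\mc[M]^U$ lies in $J$ (since $J\supseteq \mc[M]\cdot h_j$) and in the subalgebra $\mc[M]^U$, hence in $J^U$. This yields $\ideal(X)^U\subseteq J^U$ and completes the proof.

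The essential ingredient—rather than an obstacle—is the complete reducibility of rational $G$-modules, which both forces $(\ideal(X)/J)^U\neq 0$ whenever $\ideal(X)\neq J$ and makes $U$-invariants exact; without it the translation between the $G$-ideal $\ideal(X)\subset\mc[M]$ and its $U$-invariant shadow $\ideal(X)^U\subset\mc[M]^U$ (where the generators $h_j$ explicitly live) would break down.
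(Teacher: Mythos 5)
Your proof is correct and follows essentially the same route as the paper's: the key facts in both are the factorization $\varphi=\eta\circ\Psi$ (so $\ideal(X)^U=\Psi(\ker\varphi)$ is the ideal of $\mc[M]^U$ generated by the $h_j$) and complete reducibility for $G$, which you package as exactness of $(-)^U$ together with ``nonzero rational $G$-module has a nonzero $U$-invariant,'' while the paper packages it as ``a $G$-submodule of $\mc[M]$ is generated by its $U$-invariant elements.'' Your reduction to comparing $J^U$ with $\ideal(X)^U$ makes the concluding step, which the paper leaves implicit, fully explicit.
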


\begin{proof} Denote by $\eta$ the $\mc$-algebra homomorphism $\mc[M]^U\to \mc[X]^U$ given by restriction of functions on $M$ to $X$. 
Obviously we have $\varphi=\eta\circ\Psi$,  where $\Psi:\mc[x_1,\dots,x_r]\to\mc[M]^U$ is the $\mc$-algebra surjection given by $x_i\mapsto u_i$ ($i=1,\dots,r$). 
Hence $\ker(\eta)=\Psi(\ker(\varphi))$. On the other hand, clearly $\ker(\eta)=\ideal(X)^U$. 
Recall that any $G$-submodule of  $\mc[M]$ is the sum of  finite dimensional irreducible $G$-submodules, each summand containing a non-zero $U$-invariant element (a highest weight vector).  
Therefore any $G$-submodule of $\mc[M]$ is generated by its $U$-invariant elements. In particular, $\ideal(X)$ is generated by $\ideal(X)^U$ as a $G$-module. 
\end{proof} 

\begin{remark}\label{remark:surjective} The map $\eta:\mc[M]^U\to\mc[X]^U$ is  surjective. Indeed, the maximal torus 
$T$ acts rationally on $\mc[M]^U$ and on $\mc[X]^U$, and these spaces are spanned by  {\it weight vectors} (i.e. $T$-eigenvectors).  Thus  it is sufficient to show that any weight vector $h\in\mc[X]^U$ is contained in the image of $\eta$. Since $h$ is $U$-invariant, it is a highest weight vector in $\mc[X]$, hence  generates an irreducible $G$-submodule $V$ in $\mc[X]$. Thus $\ideal(X)$ has an irreducible $G$-module direct complement $V'$ in the inverse image of $V$ under the natural surjection $\mc[M]\to\mc[X]$. Take a highest weight vector $h'$ in $V'$, so $h'\in \mc[M]^U$, and $\eta(h')$ is a nonzero scalar multiple of $h$.  
\end{remark} 

By a {\it  covariant $f$ on $M$} we mean a non-zero $G$-equivariant polynomial map $f:M\to V$, where $V$ is a finite dimensional (rational) $G$-module. The non-zero covariant $f$ is {\it irreducible} if $V$ is an irreducible $G$-module. In this case the 
comorphism  of $f$ restricts to an embedding $f^\star$ of the $G$-module $V^\star$ into the coordinate ring $\mc[M]$, and we shall denote by $f^U\in\mc[M]^U$ the unique (up to scalar multiples) highest weight vector in $f^\star(V^\star)$. 
Conversely, a non-zero $T$-eigenvector in $\mc[M]^U$ generates an irreducible $G$-submodule $W$ in $\mc[M]$, and  the map $M\to W^\star$ sending 
$m\in M$ to the linear functional $W\to \mc$, $w\mapsto w(m)$ is an irreducible covariant.   So an irreducible covariant determines (up to scalar multiples) a non-zero $T$-eigenvector in $\mc[M]^U$, and vice versa. 
The algebra $\mc[M]^U$ is sometimes called therefore  the {\it  algebra of covariants} on $M$. 
We shall write $\covariants_{G}(M,V)$ for the set of covariants $f:M\to V$; it is naturally a module over the algebra $\mc[M]^{G}$ of polynomial invariants on $M$. 

%%%%%%%%%%%%%%%%%%%%%

\section{Covariants of $3\times 3$ matrices} \label{sec:tange} 

In Sections~\ref{sec:tange} and \ref{sec:hilbseries}  set $\mat:=\mc^{3\times 3}$ and  $G:=SL(3,\mc)$ acting by conjugation on $\mat$. 
We take the maximal unipotent subgroup $U$ of $G$ consisting of the unipotent upper triangular matrices, normalized by  the maximal torus $T$  consisting of the diagonal matrices in $G$. 
Generators of the algebra $\mc[\mat]^U$ were determined by Tange \cite{tange}, Section 3. Here we give a natural interpretation of all the generators, 
by presenting some natural covariants $f$ on $\mat$ such that the corresponding  $f^U$ (with the notation introduced in Section~\ref{sec:general}) 
provide the generators found in \cite{tange}.  
We shall identify the group $\mathrm{Char}(T)$ of rational characters of the maximal torus $T$ in  $G$ with $\mz^2$, such that 
$\lambda\in\mz^2$ corresponds to  the  character of $T$  given by $\mathrm{diag}(z_1,z_2,z_1^{-1}z_2^{-1})\mapsto z_1^{\lambda_1}z_2^{\lambda_2}$. The possible highest weights correspond to $\{\lambda\in\mz^2\mid \lambda_1\ge\lambda_2\ge 0\}$, and 
denote by $V^{\lambda}$ the irreducible $G$-module with highest weight $\lambda$. The $G$-module $V^{(2,1)}$ can be realized as 
\[V^{(2,1)}\cong \tzmat:=\{A\in\mat\mid \tr(A)=0\}\]
We start with the  covariant $\mat\to\tzmat$ given by  
\begin{equation}\label{eq:tange1}\tange_1:A\mapsto A-\frac 13\tr(A)I_3 \end{equation}
where $\tr$ is the usual trace function. 
Define a second covariant $\mat\to\tzmat$ by 
\begin{equation}\label{eq:tange2} 
\tange_2:=\tange_1\circ \tilde\tange_2\circ\tange_1\quad \mbox{ where }\tilde\tange_2:\tzmat\to\mat,\quad A\mapsto A^2 \end{equation}
Recall that the defining representation of $G$ on $\mc^3$ is irreducible and is isomorphic to $V^{(1,0)}$, its dual is $(\mc^3)^\star\cong V^{(1,1)}$. 
The symmetric powers of $\mc^3$ and $(\mc^\star)^3$ are also irreducible, we have 
$\mathrm{S}^3(\mc^3)\cong V^{(3,0)}$ and $\mathrm{S}^3(\mc^3)^\star\cong V^{(3,3)}$. 
Think of $\mathrm{S}^3(\mc^3)^\star$ as the space of homogeneous cubic polynomial functions on $\mc^3$, and define a covariant 
\begin{equation}\label{eq:tange6}\tange_3:\mat\to\mathrm{S}^3(\mc^3)^\star, \quad  
A\mapsto (\underline{x} \mapsto \det (\underline{x}\vert A\underline{x}\vert A^2\underline{x}))
\end{equation}
where for $\underline{x}\in \mc^3$ and $A\in\mat$ we write $(\underline{x}\vert A\underline{x}\vert A^2\underline{x}))$ for the  $3\times 3$ matrix 
whose columns are $\underline{x}$, $A\underline{x}$, $A^2\underline{x}$ and $\det$ is the determinant. 
For $g\in G$ we have 
\begin{eqnarray*}(\tange_3(gAg^{-1}))(\underline{x})&=&\det(\underline{x}\vert gAg^{-1}\underline{x}\vert gA^2g^{-1}\underline{x})
=\det(g)\det(g^{-1}\underline{x}\vert Ag^{-1}\underline{x}\vert A^2g^{-1}\underline{x}) 
\\ &=&\tange_3(A)(g^{-1}\underline{x})=(g\cdot\tange_3(A))(\underline{x})
\end{eqnarray*}
showing that $\tange_3$ is indeed a covariant. Moreover, it is  non-zero (e.g. take for $A$ a diagonal matrix with distinct eigenvalues), hence is an irreducible covariant. 
Identify $(\mc^3)^\star$ with the space of row vectors $\{\underline{x}^T\mid \underline{x}\in\mc^3\}$ in the standard way.  
Think of $\mathrm{S}^3(\mc^3)$ as the space of homogeneous cubic polynomial functions on $(\mc^3)^\star$, and similarly to the construction of $\tange_3$,  
define the irreducible covariant 
\begin{equation}\label{eq:tange7}\tange_4:\mat\to\mathrm{S}^3(\mc^3),\quad A\mapsto (\underline{x}^T\mapsto \det\left(\begin{array}{c}\underline{x}^T \\\underline{x}^TA  \\\underline{x}^TA^2\end{array}\right))
\end{equation}
It is well known that the algebra $\mc[\mat]^{G}$ of polynomial invariants is generated by the following three algebraically independent elements: 
\[\inv_1:A\mapsto \tr(A), \quad \inv_2:A\mapsto \frac 16\tr(\tange_1(A)^2),\quad \inv_3:A\mapsto \frac 12\det(\tange_1(A))\]
 (the scalars $\frac 16$ and $\frac 12$ above are chosen in order to make certain later formulae simpler).

\begin{proposition}\label{prop:tange}
The algebra $\mc[\mat]^U$ is generated by the seven elements $\inv_i$ ($i=1,2,3$) and  $\tange_j^U$ ($j=1,2,3,4$).  
\end{proposition}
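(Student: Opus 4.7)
The plan is to invoke Tange's explicit description of $\mc[\mat]^U$ from \cite{tange}, Section~3, which provides seven generators of this algebra, and to verify that our seven elements $\inv_1,\inv_2,\inv_3$ and $\tange_1^U,\tange_2^U,\tange_3^U,\tange_4^U$ generate the same subalgebra. The three invariants $\inv_i$ are the standard algebraically independent generators of $\mc[\mat]^{G}\subseteq\mc[\mat]^U$, and they coincide with the three $T$-weight-zero entries in Tange's list (possibly after rescaling). The essential content is therefore to identify the four $\tange_j^U$ with the remaining four generators.

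For each $\tange_j^U$ I compute it by the recipe of Section~\ref{sec:general}: if $f:\mat\to V^\lambda$ is an irreducible covariant, then $f^U$ is the composition of $f$ with a highest-weight functional $\phi^+\in(V^\lambda)^\star$. Since $U$ is the upper unitriangular subgroup of $SL(3,\mc)$, this functional is concrete in each target: on $\tzmat\cong V^{(2,1)}$ it is the $(3,1)$-entry of a traceless matrix; on $\mathrm{S}^3(\mc^3)^\star\cong V^{(3,3)}$ it is evaluation at $e_1=(1,0,0)^T$; and on $\mathrm{S}^3(\mc^3)\cong V^{(3,0)}$ it is evaluation at the row vector $e_3^T=(0,0,1)$. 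Substituting into \eqref{eq:tange1}--\eqref{eq:tange7} gives
\begin{align*}
\tange_1^U(A)&=a_{31},\\
\tange_2^U(A)&=\bigl((A-\tfrac13\tr(A)I_3)^2\bigr)_{31},\\
\tange_3^U(A)&=\det\bigl(e_1\mid Ae_1\mid A^2e_1\bigr),\\
\tange_4^U(A)&=\det\!\begin{pmatrix}e_3^T\\ e_3^T A\\ e_3^T A^2\end{pmatrix},
\end{align*}
each a non-zero homogeneous polynomial in the $a_{ij}$ of polynomial degree $1,2,3,3$ and $T$-weight $(2,1),(2,1),(3,0),(3,3)$ respectively.

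These bidegrees are exactly those carried by the four non-invariant generators in Tange's list. At each of them the bigraded piece of $\mc[\mat]^U$ is spanned, modulo polynomials in previously listed generators of strictly smaller bidegree, by a single new generator (as can be read off from the presentation given by Tange). Since each $\tange_j^U$ is non-zero in its bigraded piece, it differs from Tange's generator at the same bidegree by a non-zero scalar plus a polynomial in already-matched lower-bidegree generators. An induction on bidegree starting from the three invariants then shows that the subalgebra generated by our seven elements contains all of Tange's generators, and hence equals $\mc[\mat]^U$.

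The main obstacle is the explicit evaluation of the four highest-weight functionals and the verification that each $\tange_j^U$ is non-zero in its bigraded piece. Once those small matrix-entry computations are in hand, the remaining bidegree bookkeeping is routine.
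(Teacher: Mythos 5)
Your proposal is correct and follows essentially the same approach as the paper's (very terse) proof: reduce to Tange's Proposition~2 and identify the $\inv_i$ and $\tange_j^U$ with his seven explicit generators by writing out their coordinate expressions (via the highest-weight functionals you correctly determined: the $(3,1)$-entry, evaluation at $e_1$, evaluation at $e_3^T$) and comparing bidegrees. The one point to tighten is $\tange_2^U$ at bidegree $(2,(2,1))$, where the bigraded piece of $\mc[\mat]^U$ is two-dimensional (containing also $\inv_1\tange_1^U$), so ``non-zero in its bigraded piece'' should read ``non-zero modulo the already-matched generators''; here this is immediate from the $a_{21}a_{32}$ term of $\tange_2^U$, and it indeed falls under your ``routine bookkeeping.''
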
 

\begin{proof} 
This is a restatement of  Proposition 2 from \cite{tange} giving seven explicit $T$-eigenvectors generating $\mc[\mat]^U$. To see this  one just has to write down explicit expressions 
for the $\tange_j^U$ in terms of the coordinate functions on $\mat$. 
\end{proof} 

We shall view $\mc[\tzmat]$ as a subalgebra of $\mc[\mat]$ via the embedding $f\mapsto f\circ \tange_1$. 
Clearly $\mc[\mat]$ is a polynomial ring over $\mc[\tzmat]$ generated by $\inv_1$. 
Moreover, 
\begin{equation}\label{eq:fcircc1}\mbox{for each } f\in \{\inv_2,\inv_3,\tange_1,\tange_2,\tange_3,\tange_4\}\mbox{ we have that }f=f\circ\tange_1
\end{equation} hence $f^U\in\mc[\tzmat]$. 
Thus Proposition~\ref{prop:tange} can be restated as follows: 

\begin{proposition}\label{prop:tange2}
We have $\mc[\mat]^U=\mc[\tzmat]^U[\inv_1]$ and 
$\mc[\tzmat]^U$ is generated by $\inv_2,\inv_3,\tange_1^U,\tange_2^U,\tange_3^U,\tange_4^U$. 
\end{proposition}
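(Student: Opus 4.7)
The plan is to deduce Proposition~\ref{prop:tange2} as a direct reformulation of Proposition~\ref{prop:tange}, once the $G$-equivariant splitting $\mat=\tzmat\oplus \mc\cdot I_3$ is put to work.

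First I would observe that $\tange_1$ is the idempotent projection $\mat\to\tzmat$, and that $\mat=\tzmat\oplus \mc\cdot I_3$ as $G$-modules, because the scalar line $\mc\cdot I_3$ is fixed pointwise by the conjugation action of $G$. Dually, $\mc[\mat]=\mc[\tzmat]\otimes_{\mc}\mc[\inv_1]$ as $G$-modules, where we view $\mc[\tzmat]\subset\mc[\mat]$ via the embedding $f\mapsto f\circ\tange_1$ and where $\inv_1=\tr$ is $G$-invariant. In particular $\mc[\mat]$ is a polynomial ring in $\inv_1$ over $\mc[\tzmat]$. Taking $U$-invariants commutes with tensoring by the $U$-invariant line $\mc[\inv_1]$, so
\[\mc[\mat]^U=\mc[\tzmat]^U\otimes_{\mc}\mc[\inv_1]=\mc[\tzmat]^U[\inv_1].\]

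Next I would invoke \eqref{eq:fcircc1}, which asserts that each of the six elements $\inv_2,\inv_3,\tange_1^U,\tange_2^U,\tange_3^U,\tange_4^U$ equals its own composition with $\tange_1$; this is immediate from $\tange_1\circ\tange_1=\tange_1$ and the fact that $\tange_2,\tange_3,\tange_4$ and $\inv_2,\inv_3$ were defined through the traceless part. Consequently these six elements all lie inside the subalgebra $\mc[\tzmat]^U\subset\mc[\mat]^U$.

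Now combine the two ingredients. By Proposition~\ref{prop:tange}, the seven elements $\inv_1,\inv_2,\inv_3,\tange_1^U,\ldots,\tange_4^U$ generate $\mc[\mat]^U$. Under the decomposition $\mc[\mat]^U=\mc[\tzmat]^U[\inv_1]$, every element of $\mc[\tzmat]^U$ appears as the constant term (with respect to $\inv_1$) of some polynomial expression in these seven generators. Since the six elements $\inv_2,\inv_3,\tange_1^U,\ldots,\tange_4^U$ already lie in $\mc[\tzmat]^U$, expanding any such polynomial expression and collecting the coefficient of $\inv_1^0$ shows that $\mc[\tzmat]^U$ is generated by precisely these six elements. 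There is no substantive obstacle here; the only point that needs care is the verification that the six listed covariants really factor through $\tange_1$, which is exactly the content of \eqref{eq:fcircc1}.
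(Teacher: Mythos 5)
Your proposal is correct and is essentially the paper's own argument, made explicit: the paper sets up the embedding $\mc[\tzmat]\hookrightarrow\mc[\mat]$ via $f\mapsto f\circ\tange_1$, notes $\mc[\mat]=\mc[\tzmat][\inv_1]$, records \eqref{eq:fcircc1}, and then asserts Proposition~\ref{prop:tange2} as a restatement of Proposition~\ref{prop:tange}. You simply spell out the two small steps (passing to $U$-invariants across the tensor decomposition, and extracting the $\inv_1^0$-coefficient) that the paper leaves tacit.
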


\begin{proposition} \label{prop:coordfunctions} 
The coordinate functions  of 
$\tange_3$ and $\tange_4$ 
are contained in the $\mc$-subspace of $\mc[\mat]$ spanned by all the coordinate functions of $\mathcal{P}_1:\mat\to\bigwedge^3\mat$, 
$A\mapsto I_3\wedge A\wedge A^2$. 
\end{proposition}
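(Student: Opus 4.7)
My plan is to factor $\tange_3$ and $\tange_4$ through $\mathcal{P}_1$: for $j \in \{3,4\}$ I will construct a $G$-equivariant linear map $\psi_j$ from $\bigwedge^3 \mat$ to the target of $\tange_j$ such that $\tange_j = \psi_j \circ \mathcal{P}_1$. Once this is in place, every coordinate function of $\tange_j$, i.e.\ a function of the form $A \mapsto \mu(\tange_j(A))$ for some linear functional $\mu$ on the target of $\tange_j$, equals $(\mu \circ \psi_j) \circ \mathcal{P}_1$; since $\mu \circ \psi_j \in (\bigwedge^3 \mat)^*$, this is by definition a linear combination of coordinate functions of $\mathcal{P}_1$, which is exactly what must be shown.

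To build $\psi_3$, I use that for $M_1, M_2, M_3 \in \mat$ and $\underline{x} \in \mc^3$, the element $M_1 \underline{x} \wedge M_2 \underline{x} \wedge M_3 \underline{x} \in \bigwedge^3 \mc^3$ is alternating trilinear in $(M_1, M_2, M_3)$ and homogeneous cubic in $\underline{x}$. Composing with the canonical $SL(3,\mc)$-equivariant identification $\bigwedge^3 \mc^3 \cong \mc$, $e_1 \wedge e_2 \wedge e_3 \mapsto 1$, this assignment descends to a $G$-equivariant linear map $\psi_3 : \bigwedge^3 \mat \to \mathrm{S}^3(\mc^3)^*$, where the target is viewed as the space of cubic polynomial functions on $\mc^3$. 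Evaluating on $I_3 \wedge A \wedge A^2$ yields $\psi_3(I_3 \wedge A \wedge A^2)(\underline{x}) = \underline{x} \wedge A\underline{x} \wedge A^2\underline{x} = \det(\underline{x} \vert A\underline{x} \vert A^2\underline{x})$, which by \eqref{eq:tange6} is exactly $\tange_3(A)(\underline{x})$, so $\tange_3 = \psi_3 \circ \mathcal{P}_1$.

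The construction of $\psi_4$ is completely parallel, using the right action of $\mat$ on row vectors: the expression $\underline{x}^T M_1 \wedge \underline{x}^T M_2 \wedge \underline{x}^T M_3 \in \bigwedge^3(\mc^3)^*$ descends, after the dual identification $\bigwedge^3(\mc^3)^* \cong \mc$, to a $G$-equivariant linear map $\psi_4 : \bigwedge^3 \mat \to \mathrm{S}^3(\mc^3)$, and substituting $I_3 \wedge A \wedge A^2$ reproduces the formula \eqref{eq:tange7} defining $\tange_4(A)$. I do not expect any real obstacle: the content of the argument is simply the observation that the columns (respectively rows) in the determinants defining $\tange_3$ and $\tange_4$ are obtained by applying the three matrices $I_3, A, A^2$ to a single vector, so these covariants depend on $(I_3, A, A^2)$ only through their wedge product. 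Conceptually this mirrors the fact that the irreducible $SL(3,\mc)$-modules $\mathrm{S}^3(\mc^3)$ and $\mathrm{S}^3(\mc^3)^*$ appear as summands in the decomposition of $\bigwedge^3 \mat$, but the explicit maps $\psi_3, \psi_4$ make that decomposition unnecessary for the proof.
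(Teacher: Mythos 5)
Your proof is correct and captures the same essential idea as the paper's: the determinant $\det(\underline{x}\,|\,A\underline{x}\,|\,A^2\underline{x})$ is alternating multilinear in $(I_3,A,A^2)$, hence factors through $\mathcal{P}_1$. The paper realizes this factorization concretely via the Cauchy--Binet formula applied to a product of a $3\times 9$ and a $9\times 3$ matrix, whereas you appeal to the universal property of $\bigwedge^3\mat$ to produce the linear map $\psi_j$ abstractly; the latter is arguably cleaner and makes the $G$-equivariance of the factoring map transparent, but it is a repackaging of the same computation rather than a genuinely different route.
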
 

\begin{proof} 
This follows from the Cauchy-Binet formula and the following two matrix equalities, where  $e_1,e_2,e_3$ are the standard basis vectors in $\mc^3$, $a_1,a_2,a_3$ are the columns of a $3\times 3$ matrix $A$, $b_1,b_2,b_3$ are the columns of a $3\times 3$ matrix $B$, and $\underline{x}\in\mc^3$: 
\[\left(\begin{array}{ccc}x_1I_3 & x_2I_3 & x_3I_3 \end{array}\right)_{3\times 9}\left(\begin{array}{ccc}e_1 & a_1 & b_1 \\e_2 & a_2 & b_2 \\e_3 & a_3 & b_3\end{array}\right)_{9\times 3}=\left(\begin{array}{ccc}\underline{x} & A\underline{x} & B\underline{x}\end{array}\right)_{3\times 3}\]  
\[\left(\begin{array}{ccc}\underline{x}^T & 0 & 0 \\ 0 & \underline{x}^T & 0 \\0 & 0 & \underline{x}^T\end{array}\right)_{3\times 9}
\left(\begin{array}{ccc}e_1 & a_1 & b_1 \\e_2 & a_2 & b_2 \\e_3 & a_3 & b_3\end{array}\right)_{9\times 3}=
\left(\begin{array}{ccc}\underline{x} & A^T\underline{x} & B^T\underline{x}\end{array}\right)_{3\times 3}
=\left(\begin{array}{c}\underline{x}^T \\\underline{x}^TA \\\underline{x}^TB\end{array}\right)_{3\times 3}^T\] 
\end{proof} 

Now we turn to the affine subvariety $\mat_1\subset\mat$. 
Restriction of functions on $\mat$ to $\mat_1$ gives the natural surjection 
\[\mc[\mat]\to\mc[\mat_1]\] 
onto the coordinate ring $\mc[\mat_1]=\mc[\mat]/\ideal(\mat_1)$ of the affine algebraic variety $\mat_1$. 
We shall write $\overline{\inv_j}$, $\overline{\tange_j}$, respectively $\overline{\tange_j^U}$ for the restriction to $\mat_1$ of 
$\inv_i$, $\tange_j$, respectively $\tange_j^U$. 
The covariant $\overline{\tange_1}$ maps $\mat_1$ onto 
\[\tzmat_1:=\mat_1\cap\tzmat \]  
hence induces an embedding of 
$\mc[\tzmat_1]$ as a subalgebra of $\mc[\mat_1]$. 
Furthermore, $\mat_1=\tzmat_1\oplus\mc I_3$, hence 
\begin{equation}\label{eq:C[N1][d1]} 
\mc[\mat_1]=\mc[\tzmat_1][\overline{\inv_1}]
\end{equation}
is a polynomial ring generated by $\overline{\inv_1}$ over the subalgebra $\mc[\tzmat_1]$, and 
$\overline{\inv_2},\overline{\inv_3},\overline{\tange_i^U}
\in\mc[\tzmat_1]$.

\begin{proposition}\label{prop:firstrelations} 
We have the following equalities for covariants on $\mat_1$: 
\[\overline{\tange_3}=0,\quad \overline{\tange_4}=0, \quad 
\overline{\inv_2}^3=\overline{\inv_3}^2,\quad \overline{\inv_3}\overline{\tange_1}=\overline{\inv_2}\overline{\tange_2}\]
(the last equality is understood in the $\mc[\mat_1]^G$-module $\covariants_G(\mat_1,\tzmat)$). 
\end{proposition}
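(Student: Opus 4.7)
My plan is to dispose of the two vanishing identities immediately, and then reduce the remaining two to a short Cayley--Hamilton calculation for traceless matrices.

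For $\overline{\tange_3}=0$ and $\overline{\tange_4}=0$, Proposition~\ref{prop:coordfunctions} does all the work: the coordinate functions of $\tange_3$ and $\tange_4$ lie in the $\mc$-linear span of the coordinate functions of $\mathcal{P}_1$, and the latter vanish on $\mat_1$ by the very definition of $\mat_1$. A direct argument is equally short: $A\in\mat_1$ means $I_3,A,A^2$ are linearly dependent, so $A^2\underline{x}$ lies in the span of $\underline{x}$ and $A\underline{x}$ for every $\underline{x}\in\mc^3$, killing both determinants in \eqref{eq:tange6} and \eqref{eq:tange7}.

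For identities (3) and (4) I would exploit \eqref{eq:fcircc1}: every quantity appearing on either side factors through $\tange_1$, so it suffices to verify the identities for a traceless $B=\tange_1(A)\in\tzmat_1$ in place of $A$. Because $\tr B=0$, the characteristic polynomial of $B$ is the depressed cubic $x^3-3\inv_2(B)x-2\inv_3(B)$, and Cayley--Hamilton yields
\[
B^3 \;=\; 3\inv_2(B)\,B + 2\inv_3(B)\,I_3.
\]
The trivial case $B=0$ kills every term of (3) and (4); otherwise $\deg(m_B)=2$ forces a unique relation $B^2=aB+bI_3$ with $a,b\in\mc$. Taking traces gives $b=\tfrac{1}{3}\tr(B^2)=2\inv_2(B)$, and substituting $B^2=aB+bI_3$ into the Cayley--Hamilton identity and comparing the (linearly independent) coefficients of $B$ and $I_3$ produces
\[
a^2=\inv_2(B), \qquad a\,\inv_2(B)=\inv_3(B).
\]
Cubing the first and squaring the second gives $\inv_2(B)^3=a^6=\inv_3(B)^2$, which is identity (3). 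For identity (4) one further notes $\tange_2(B)=\tange_1(B^2)=B^2-bI_3=aB$, whence $\inv_2(B)\tange_2(B)=a\inv_2(B)\,B=\inv_3(B)\,\tange_1(B)$, as desired.

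I do not foresee any serious obstacle; the only mild care is to treat $B=0$ (equivalently $a=0$) separately so as not to be tempted to divide by $a$, and to remember that the normalizations $\tfrac{1}{6}$ and $\tfrac{1}{2}$ built into $\inv_2,\inv_3$ are exactly what turns the relations obtained above into the clean statements $a^2=\inv_2(B)$ and $a^3=\inv_3(B)$.
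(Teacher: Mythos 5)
Your proof is correct, but it takes a genuinely different route for the last two identities. The paper appeals to Proposition~\ref{prop:zariskidense1} (Zariski density of diagonalizable matrices in $\mat_1$), reduces via covariance to the one-parameter family $D(z)=\mathrm{diag}(z,z,-2z)$, and evaluates $\inv_2,\inv_3,\tange_1,\tange_2$ there explicitly. You instead work pointwise at \emph{every} $A\in\mat_1$: passing through $\tange_1$ to a traceless $B\in\tzmat_1$, using Cayley--Hamilton for the depressed cubic $x^3-3\inv_2(B)x-2\inv_3(B)$, and exploiting that $\deg(m_B)\le 2$ forces a unique expression $B^2=aB+bI_3$ (for $B\ne 0$), from which $a^2=\inv_2(B)$, $a\inv_2(B)=\inv_3(B)$, and $\tange_2(B)=aB$ drop out by comparing the linearly independent coefficients of $I_3$ and $B$. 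The computation checks out, including the trace argument $b=\tfrac13\tr(B^2)=2\inv_2(B)$ and the treatment of $B=0$. Your version is self-contained and avoids invoking Zariski density entirely, which is a modest simplification; the paper's approach is shorter once Proposition~\ref{prop:zariskidense1} is available and makes the underlying geometry (reduction to diagonal orbit representatives) more transparent. Both yield the identities as equalities of regular functions on $\mat_1$, as required. The handling of $\overline{\tange_3}=\overline{\tange_4}=0$ is essentially identical to the paper's.
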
 

\begin{proof} The equalities $\overline{\tange_3}=0=\overline{\tange_4}$ follow from Proposition~\ref{prop:coordfunctions} and the fact that $\mathcal{P}_1$ maps $\mat_1$ to zero. Since diagonalizable elements in $\mat_1$ constitute a Zariski dense subset in $\mat_1$ by Proposition~\ref{prop:zariskidense1}, 
it is sufficient to check vanishing of the polynomial maps 
$\inv_2^3-\inv_3^2$ and $\inv_3\tange_1-\inv_2\tange_2$ 
on diagonalizable elements in $\mat_1$. 
 Therefore by \eqref{eq:fcircc1} and the covariance property  it is sufficient to check vanishing of the above covariants on the diagonal matrices 
 $D(z):=\mathrm{diag}(z,z,-2z)$ where $z\in\mc$. 
Now we have 
\[\inv_2(D(z))=z^2,\quad \inv_3(D(z))=-z^3,\quad \tange_1(D(z))=D(z),\quad \tange_2(D(z))=-zD(z)\] 
so the desired relations obviously hold. 
\end{proof} 

Recall that the elements $\tange_i^U$ are determined only up to non-zero scalar multiples; according to 
Proposition~\ref{prop:firstrelations} it is possible to normalize $\tange_1^U$ and $\tange_2^U$ so that 
the equality 
\begin{equation}\label{eq:normalization}\overline{\inv_3}\overline{\tange_1^U}=\overline{\inv_2}\overline{\tange_2^U}\end{equation}
holds, and from now on we assume that $\tange_1^U$ and $\tange_2^U$ were chosen so that \eqref{eq:normalization} holds. 
The standard $\mn_0$-grading on the polynomial algebra  $\mc[\mat]$ and the grading by the group $\mathrm{Char}(T)=\mz^2$ of rational characters defined by the action of the maximal torus $T\subset G$ can be combined to a bigrading by $\mn_0\times\mathrm{Char}(T)$: we say that $f\in \mc[\mat]$ is {\it bihomogeneous of bidegree} 
$\bideg(f)=(n,\lambda)$ if $f(zA)=z^nf(A)$ for all $A\in\mat$ and $z\in\mc$, and $t\cdot f=t_1^{\lambda_1}t_2^{\lambda_2}f$ for any 
$\mathrm{diag}(t_1,t_2,t_1^{-1}t_2^{-1})\in T$.  
Clearly the algebras 
$\mc[\mat]^U$, $\mc[\mat_1]$, $\mc[\mat_1]^U$, $\mc[\tzmat_1]^U$ all inherit the bigrading from $\mc[\mat]$. 

In the following statement  $\mc[z^2,z^3,D,zD]$ stands for the subalgebra of the two-variable polynomial ring $\mc[z,D]$ generated by the monomials  
$z^2,z^3,D,zD$ (the notation $z,D$ for the indeterminates is inspired by the proof of Proposition~\ref{prop:firstrelations}), and $\mc[x_0,x_1,x_2,x_3,x_4]$ is a five-variable polynomial algebra.

\begin{theorem}\label{thm:covsonF}  
(i) The algebra $\mc[\mat_1]^U$ is a polynomial ring generated by $\overline{\inv_1}$ over $\mc[\tzmat_1]^U$. 
 
(ii) There is a $\mc$-algebra isomorphism $\eta:\mc[z^2,z^3,D,zD]\to \mc[\tzmat_1]^U$ with 
\[\eta:z^2\mapsto\overline{\inv_2}, \quad z^3\mapsto \overline{\inv_3},\quad  
D\mapsto\overline{\tange_1^U}, \quad  zD\mapsto \overline{\tange_2^U}. \]

(iii) The kernel of the natural surjection 
$\varphi:\mc[x_0,x_1,x_2,x_3,x_4]\to \mc[\mat_1]^U$, 
$x_0\mapsto \overline{\inv_1}$, $x_1\mapsto \overline{\inv_2}$, $x_2\mapsto \overline{\inv_3}$, $x_3\mapsto \overline{\tange_1^U}$, $x_4\mapsto \overline{\tange_2^U}$ 
is generated as an ideal by 
\[x_1^3-x_2^2,\qquad x_1x_4-x_2x_3,\qquad x_4^2-x_1x_3^2,\qquad x_2x_4-x_1^2x_3.\]
\end{theorem}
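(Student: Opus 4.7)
The plan is to prove (i), (ii), (iii) in sequence.

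\textbf{Part (i)}: Since $\mat_1 = \tzmat_1 \oplus \mc I_3$ as $G$-modules (with $\mc I_3$ trivial), \eqref{eq:C[N1][d1]} expresses $\mc[\mat_1]$ as a polynomial extension of $\mc[\tzmat_1]$ by the $G$-invariant $\overline{\inv_1}$. Taking $U$-invariants commutes with adjoining a $U$-fixed transcendental, yielding $\mc[\mat_1]^U = \mc[\tzmat_1]^U[\overline{\inv_1}]$.

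\textbf{Part (ii)}: For $\eta$ to be well defined I must verify the four binomial identities presenting the monomial subring $\mc[z^2,z^3,D,zD]$ inside $\mc[\tzmat_1]^U$. Two of these, $\overline{\inv_2}^3 = \overline{\inv_3}^2$ and $\overline{\inv_3}\,\overline{\tange_1^U} = \overline{\inv_2}\,\overline{\tange_2^U}$, are Proposition~\ref{prop:firstrelations} together with the normalization~\eqref{eq:normalization}. For the other two I invoke that $\tzmat_1$ is irreducible (hence $\mc[\tzmat_1]$ is a domain), since it is the Zariski closure of the image of $SL(3,\mc)\times\mc \to \tzmat$, $(g,\alpha) \mapsto g\,\mathrm{diag}(\alpha,\alpha,-2\alpha)g^{-1}$ (cf.\ Proposition~\ref{prop:zariskidense1}). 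Squaring \eqref{eq:normalization} and substituting $\overline{\inv_3}^2=\overline{\inv_2}^3$ gives $\overline{\inv_2}^2\bigl(\overline{\inv_2}(\overline{\tange_1^U})^2-(\overline{\tange_2^U})^2\bigr)=0$; cancellation in the domain yields $(\overline{\tange_2^U})^2=\overline{\inv_2}(\overline{\tange_1^U})^2$. Multiplying \eqref{eq:normalization} by $\overline{\tange_2^U}$ and using this new relation gives $\overline{\tange_1^U}\bigl(\overline{\inv_3}\,\overline{\tange_2^U}-\overline{\inv_2}^2\,\overline{\tange_1^U}\bigr)=0$; cancellation (since $\overline{\tange_1^U}$ is nonzero, e.g.\ at $E_{31}\in\mat_1$) yields the fourth relation. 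Surjectivity of $\eta$ follows from Proposition~\ref{prop:tange2} and Remark~\ref{remark:surjective}, noting that $\tange_3^U,\tange_4^U$ vanish on $\mat_1$ by Proposition~\ref{prop:firstrelations}. For injectivity I use the bigrading: each basis monomial $z^aD^b$ of $\mc[z^2,z^3,D,zD]$ (with $(a,b)\neq(1,0)$) has image of bidegree $(a+b,(2b,b))$ in $\mc[\tzmat_1]^U$, so distinct monomials sit in distinct bigraded pieces; nonvanishing is witnessed by evaluation at $A=E_{31}+t(E_{11}+E_{22}-2E_{33})\in\mat_1$ for $t\neq 0$, where a direct computation gives $\overline{\inv_2}(A)=t^2$, $\overline{\inv_3}(A)=-t^3$, $\overline{\tange_1^U}(A)=1$, $\overline{\tange_2^U}(A)=-t$, whence $\eta(z^aD^b)(A)=\pm t^a\neq 0$.

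\textbf{Part (iii)}: By (i), $\ker\varphi$ is the extension by $x_0$ of $\ker\varphi_0$, where $\varphi_0:\mc[x_1,x_2,x_3,x_4]\to\mc[\tzmat_1]^U$; all four binomials lie in $\ker\varphi_0$ by (ii), so $I := (f_1,f_2,f_3,f_4)\subseteq\ker\varphi_0$. I interpret the binomials as rewriting rules $x_2^2\to x_1^3$, $x_1x_4\to x_2x_3$, $x_4^2\to x_1x_3^2$, $x_2x_4\to x_1^2x_3$. Each strictly decreases $(\deg_{x_4},\deg_{x_2})$ in lex order, so the rewriting terminates; the irreducible normal forms are $x_1^ax_2^bx_3^c$ with $b\in\{0,1\}$ together with $x_3^cx_4$. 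A short enumeration by bidegree shows these give exactly one monomial per bidegree $(m,n)\neq(1,0)$ and none at $(1,0)$, so $\dim_{\mc}R_{(m,n)}\leq 1$ with $\dim_{\mc}R_{(1,0)}=0$ for $R:=\mc[x_1,\ldots,x_4]/I$. Combined with the surjection $R\twoheadrightarrow\mc[\tzmat_1]^U\cong\mc[z^2,z^3,D,zD]$ from (ii) (whose bigraded dimensions are exactly these), the surjection is a bigraded isomorphism, so $I=\ker\varphi_0$.

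\emph{Main obstacle:} the relations $f_3,f_4$ are not in the polynomial ideal $(f_1,f_2)\subset\mc[x_1,\ldots,x_4]$ (a bidegree check rules this out), so they must be derived as identities in $\mc[\tzmat_1]^U$ using the domain structure of that ring. Moreover, the four binomials do not form a Gr\"obner basis under naive lex orders (an extra syzygy such as $x_4^3 - x_2x_3^3$ appears), which is why I sidestep the standard-monomial method and instead bound $\dim R_{(m,n)}$ via the rewriting termination on $(\deg_{x_4},\deg_{x_2})$, closing the argument using the target dimensions supplied by (ii).
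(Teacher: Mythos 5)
Your proposal is correct in substance, but takes a genuinely different route in part (ii), so let me compare.

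For (ii), the paper first constructs a surjection $\tilde\eta:\mc[z^2,z^3,D]\to\mc[\overline{\inv_2},\overline{\inv_3},\overline{\tange_1^U}]$ (well-definedness needs only the single cuspidal-cubic relation $(z^2)^3=(z^3)^2$), shows $\tilde\eta$ is an isomorphism by observing that each bihomogeneous piece of $\mc[z^2,z^3,D]$ is spanned by a single monomial which maps to a nonzero product of nonzero elements of the domain $\mc[\tzmat_1]$, and then extends $\tilde\eta$ through the fraction field using $zD=(z^3D)/z^2$ together with \eqref{eq:normalization}. You instead verify all four toric relations in $\mc[\tzmat_1]^U$ up front (deriving $f_3$, $f_4$ from $f_1$, $f_2$ via the domain property — this is a correct and pleasant derivation), and then prove bijectivity directly on all four generators by the bigrading together with evaluation at the explicit point $A_t=E_{31}+t\,\mathrm{diag}(1,1,-2)\in\tzmat_1$. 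Your test point is well chosen: it is diagonalizable with two eigenvalues for $t\ne 0$, and a direct computation gives $\overline{\inv_2}(A_t)=t^2$, $\overline{\inv_3}(A_t)=-t^3$, $\overline{\tange_1^U}(A_t)=1$, $\overline{\tange_2^U}(A_t)=-t$, so every $\psi$-image of a monomial is $\pm t^a\ne 0$. (Note that the paper's orbit $D(z)$ cannot serve this purpose because $\tange_1^U$ vanishes on diagonal matrices; the paper circumvents evaluation by the domain argument.) For (iii) you use the same set of normal forms as the paper, $\{x_3^ix_4,\ x_1^ix_3^j,\ x_1^ix_2x_3^j\}$, justified by termination of the rewriting system under $(\deg_{x_4},\deg_{x_2})$-lex rather than a Gröbner-basis computation; both are fine.

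One organizational point you should fix: as written, (ii) presupposes that $f_1,\ldots,f_4$ present the monomial subring (``the four binomial identities \emph{presenting} the monomial subring'') in order to get well-definedness of $\eta$, while (iii) invokes the isomorphism $\mc[\tzmat_1]^U\cong\mc[z^2,z^3,D,zD]$ of (ii). This is circular. The cleanest repair within your framework is to drop $\eta$ and work with the two surjections $\psi:\mc[x_1,\ldots,x_4]\to\mc[\tzmat_1]^U$ and $\phi:\mc[x_1,\ldots,x_4]\to\mc[z^2,z^3,D,zD]$ simultaneously: both factor through $R:=\mc[x]/(f_1,\ldots,f_4)$; the normal-form count gives $\dim_\mc R_{(a,b)}\le 1$ (and $=0$ at $(1,0)$); $\phi$ forces $\dim_\mc R_{(a,b)}\ge\dim_\mc\mc[z^2,z^3,D,zD]_{(a,b)}$, which equals $1$ exactly when $(a,b)\ne(1,0)$, hence $R\cong\mc[z^2,z^3,D,zD]$ and $\ker\phi=(f_1,\ldots,f_4)$; your evaluation at $A_t$ then forces $\dim_\mc\mc[\tzmat_1]^U_{(a,b)}=\dim_\mc R_{(a,b)}$ as well, so $\ker\psi=\ker\phi$, giving (ii) and (iii) at once. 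Alternatively, adopt the paper's fraction-field construction of $\eta$, which needs only $f_1$, $f_2$ for well-definedness, and keep your (iii) as a corollary. Either way the mathematical content of your argument is sound; it is only the order of the steps that needs tightening.
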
 

\begin{proof}  Statement (i) follows from \eqref{eq:C[N1][d1]}.  
As explained in Remark~\ref{remark:surjective}, the natural surjection $\mc[\tzmat]\to\mc[\tzmat_1]$ restricts to a surjection 
$\mc[\tzmat]^U\to\mc[\tzmat_1]^U$.  Hence by Propositions~\ref{prop:tange2} and \ref{prop:firstrelations} 
$\mc[\tzmat_1]^U$ is generated by $\overline{\inv_2},\overline{\inv_3},\overline{\tange_1^U},\overline{\tange_2^U}$. The variety $\tzmat_1$ is irreducible, as by Proposition~\ref{prop:zariskidense1}  it is the Zariski closure of  $G \cdot \{D(z)\mid z\in \mc\}$ (with the notation of the proof of Proposition~\ref{prop:firstrelations}).  
Thus the  coordinate ring $\mc[\tzmat_1]$ is a domain, and by  \eqref{eq:normalization}  we have the equality  
$\overline{\tange_2^U}=\overline{\tange_1^U}\overline{\inv_3}/\overline{\inv_2}$
in the function field $\mc(\tzmat_1)$. 
The proof of Proposition~\ref{prop:firstrelations} shows that the map $z^2\mapsto \overline{\inv_2}$, $z^3\mapsto \overline{\inv_3}$ extends to a $\mc$-algebra isomorphism 
$\mc[z^2,z^3]\to \mc[\overline{\inv_2},\overline{\inv_3}]\subset \mc[\tzmat_1]$. This extends to a $\mc$-algebra surjection 
$\tilde\eta:\mc[z^2,z^3,D]\to \mc[\overline{\inv_2},\overline{\inv_3},\overline{\tange_1^U}]$ with $D\mapsto \overline{\tange_1^U}$. 
We claim that $\tilde\eta$ is an isomorphism. Indeed, define a bigrading on the polynomial algebra $\mc[z,D]$ by setting 
$\bideg(z):=(1,(0,0))$ and $\bideg(D):=(1,(2,1))$. Then $\tilde\eta$ is a homomorphism of bigraded algebras, so $\ker(\tilde\eta)$ is spanned by bihomogeneous elements. 
Now observe that the bihomogeneous components of $\mc[z^2,z^3,D]$ are one-dimensional, each is spanned by a monomial  
$(z^2)^i(z^3)^jD^k$, and these monomials are not mapped to zero, since $\overline{\inv_2}$, $\overline{\inv_3}$, $\overline{\tange_1^U}$ are non-zero, and  
$ \mc[\overline{\inv_2},\overline{\inv_3},\overline{\tange_1^U}]$ is a domain. 
The isomorphism $\tilde\eta$ extends to an isomorphism between the fields of fractions of $\mc[z^2,z^3,D]$ and 
$\mc[\overline{\inv_2},\overline{\inv_3},\overline{\tange_1^U}]$, and this latter field isomorphism restricts to the desired  $\mc$-algebra isomorphism 
$\eta:\mc[z^2,z^3,D,zD]\to \mc[\overline{\inv_2},\overline{\inv_3},\overline{\tange_1^U},\overline{\tange_1^U}\overline{\inv_3}/\overline{\inv_2}]$. 
Thus (ii) is proved. 

To prove (iii),  by  (ii) it is sufficient to show that the given four polynomials generate the kernel of the natural surjection 
$\phi: \mc[x_1,x_2,x_3,x_4]\to\mc[z^2,z^3,D,zD]$ given by $x_1\mapsto z^2$, $x_2\mapsto z^3$, $x_3\mapsto D$, $x_4\mapsto zD$. 
The given four polynomials are indeed in the kernel of $\phi$, and it is easy to see that modulo the ideal generated by them, any monomial in 
$\mc[x_1,x_2,x_3,x_4]$ can be rewritten as a linear combination of the monomials 
\[\{x_3^ix_4,\quad x_1^ix_3^j, \quad x_1^ix_2x_3^j\quad \mid \quad i,j=0,1,\dots\}.\] 
Now $\phi$ maps bijectively the above set of monomials onto 
$\{z^kD^l\mid (k,l)\neq (1,0)\}$, which is a basis of $\mc[z^2,z^3,D,zD]$.  
This implies the claim. 
\end{proof}

\begin{corollary}\label{cor:sl3stable} 
As a $G$-stable ideal, $\ideal(\mat_1)$ is generated by $\tange_3^U$ and $\tange_4^U$. 
\end{corollary}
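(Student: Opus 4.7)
The plan is to apply Lemma~\ref{lemma:logic} with $M=\mat$, $X=\mat_1$ and the seven generators $\inv_1,\inv_2,\inv_3,\tange_1^U,\tange_2^U,\tange_3^U,\tange_4^U$ of $\mc[\mat]^U$ supplied by Proposition~\ref{prop:tange}. By Proposition~\ref{prop:firstrelations} one has $\overline{\tange_3^U}=\overline{\tange_4^U}=0$, so the two variables corresponding to these generators lie in the kernel of $\varphi:\mc[x_0,\dots,x_6]\to\mc[\mat_1]^U$; modulo this pair of variables $\varphi$ reduces to the surjection of Theorem~\ref{thm:covsonF}(iii), whose kernel is generated by four explicit polynomials. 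Lemma~\ref{lemma:logic} then presents $\ideal(\mat_1)$ as the $G$-stable ideal generated by $\tange_3^U,\tange_4^U$ together with
\[r_1=\inv_2^3-\inv_3^2,\quad r_2=\inv_2\tange_2^U-\inv_3\tange_1^U,\quad r_3=(\tange_2^U)^2-\inv_2(\tange_1^U)^2,\quad r_4=\inv_3\tange_2^U-\inv_2^2\tange_1^U.\]

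The remaining task is to show that each $r_i$ already lies in the $G$-stable ideal $I$ generated by $\tange_3^U,\tange_4^U$. I would use that $I$ coincides with the ordinary ideal in $\mc[\mat]$ generated by all coordinate functions of $\tange_3$ and $\tange_4$ (since the $G$-orbit of a highest weight vector spans its ambient irreducible submodule), and then realize each $r_i$ as a nonzero scalar multiple of the highest weight vector of an explicit $G$-equivariant contraction built from the known covariants. For the invariant $r_1$, the natural pairing $\mathrm{S}^3(\mc^3)^\star\otimes\mathrm{S}^3(\mc^3)\to\mc$ applied to $(\tange_3,\tange_4)$ produces a $G$-invariant in $I$ of bidegree $(6,(0,0))$; being $\tr$-independent (since $\tange_3,\tange_4$ factor through $\tange_1$) and vanishing on $\mat_1$, this invariant must be a scalar multiple of $r_1$, and evaluation on a diagonal matrix $\mathrm{diag}(\lambda_1,\lambda_2,\lambda_3)$ with distinct eigenvalues (where both sides recover the discriminant up to constants) shows the scalar is nonzero.

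For $r_2,r_3,r_4$ the plan is similar: the tensor products of covariant targets such as $V^{(3,3)}\otimes V^{(2,1)}$ and $V^{(3,3)}\otimes V^{(3,0)}$ contain the modules $V^{(2,1)}$ and $V^{(4,2)}$ with multiplicity one by $SL(3,\mc)$ Clebsch--Gordan, and the equivariant projections of $\tange_3\otimes\tange_1$, $\tange_3\otimes\tange_2$ and their $\tange_4$-analogues yield covariants in $I$ at the correct bidegrees. Their highest weight vectors are $\tr$-independent elements of $\ideal(\mat_1)^U$ lying in a one-dimensional subspace spanned by $r_i$ (as one reads off from Theorem~\ref{thm:covsonF}(ii)), so each projection is forced to be a $\mc$-multiple of the corresponding $r_i$, and a concrete evaluation on a diagonal matrix establishes non-vanishing of the scalar. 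The hard part will be this last step: existence of the equivariant projections is immediate from representation theory, but explicitly computing each scalar factor and verifying non-vanishing for $r_2,r_3,r_4$ is where the bulk of the computational work resides.
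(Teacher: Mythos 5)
Your reduction via Lemma~\ref{lemma:logic} to showing that $r_1,\dots,r_4$ lie in the $G$-stable ideal $I=(\tange_3^U,\tange_4^U)$ is exactly the paper's first step. Where you diverge is the ideal-membership check: the paper simply runs a Gr\"obner-basis computation in \cocoa{} to verify that all four $r_i$ lie in the ideal generated by the coordinate functions of $\tange_3$, whereas you propose to realize each $r_i$ (up to a nonzero scalar) as the highest weight vector of an explicit Clebsch--Gordan projection of $\tange_3\otimes\tange_j$ or $\tange_4\otimes\tange_j$. This is a genuinely different, more structural route, and it buys a human-readable explanation of \emph{why} the $r_i$ lie in $I$. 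Your treatment of $r_1$ via the pairing $\mathrm{S}^3(\mc^3)^\star\otimes\mathrm{S}^3(\mc^3)\to\mc$ is essentially complete, since that paired invariant is the discriminant of the characteristic polynomial of $\tange_1(A)$, a known nonzero multiple of $\inv_2^3-\inv_3^2$. For $r_2,r_3,r_4$ the multiplicity-one statements you need do hold, but be careful with bidegrees: $V^{(3,3)}\otimes V^{(3,0)}$ is a slip, since $\tange_3\otimes\tange_4$ sits in degree $6$ while $r_3$ has degree $4$; you want the $V^{(4,2)}$-component of $\tange_3\otimes\tange_1$ or $\tange_4\otimes\tange_1$. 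The substantive gap, which you honestly flag, is the nonvanishing of those projections: Clebsch--Gordan guarantees an equivariant projection exists, not that its composite with $\tange_3\otimes\tange_j$ is nonzero, and ``evaluate on a diagonal matrix'' still requires writing the projection out explicitly. That calculation is of essentially the same size as the \cocoa{} check it replaces, and if a chosen projection happened to vanish you would need another witness. So the strategy is sound and would yield a machine-free proof once those three scalars are computed, but as written the step the paper settles by computer is not yet settled.
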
 

\begin{proof} We apply Lemma~\ref{lemma:logic}: by Propositions~\ref{prop:tange2}, \ref{prop:firstrelations} and by Theorem~\ref{thm:covsonF} we conclude that $\ideal(\mat_1)$ is generated as a $G$-stable ideal by $\tange_3^U$, $\tange_4^U$, $\inv_2^3-\inv_3^2$, $\inv_2\tange_2^U-\inv_3\tange_1^U$, $(\tange_2^U)^2-\inv_2(\tange_1^U)^2$, 
$\inv_3\tange_2^U-\inv_2^2\tange_1^U$. 
It is easy to verify by computer (we used the computer algebra system \cite{cocoa}) that the latter four elements of $\mc[\mat]$ are contained in the ideal generated by the coordinate functions of $\tange_3$ (the linear span of these coordinate functions is the $G$-module generated by $\tange_3^U$), hence the result follows. 
\end{proof} 

\begin{corollary}\label{cor:main2}
(i) The ideal $\ideal(\mat_1)$ is generated by its degree $3$ homogeneous component $\ideal(\mat_1)_3$. 

(ii) The  $20$ coordinate functions of $\tange_3$ and $\tange_4$ constitute a $\mc$-basis in $\ideal(\mat_1)_3$.  

(iii) As a  $G$-module $\ideal(\mat_1)_3$ is isomorphic to $\mathrm{S}^3(\mc^3)\oplus \mathrm{S}^3(\mc^3)^\star$. 

(iv)  The coordinate functions of $\mathcal{P}_1:\mat\to\bigwedge^3\mat$ span $\ideal(\mat_1)_3$. 
\end{corollary}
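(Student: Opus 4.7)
The starting point is Corollary~\ref{cor:sl3stable}, which asserts that $\ideal(\mat_1)$ is generated as a $G$-stable ideal by $\tange_3^U$ and $\tange_4^U$. Both covariants $\tange_3$ and $\tange_4$ are homogeneous of degree $3$ (since $I_3, A, A^2$ contribute degrees $0, 1, 2$ respectively to the determinants defining them), so the highest weight vectors $\tange_3^U$ and $\tange_4^U$ lie in the degree $3$ homogeneous component $\ideal(\mat_1)_3$. Multiplying the $G$-submodule $W\subset\mc[\mat]$ that they generate by anything of strictly positive degree in $\mc[\mat]$ produces elements of strictly larger degree, so $W$ itself already exhausts $\ideal(\mat_1)_3$; this yields (i).

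To identify $W$ concretely, I would invoke the description of $\tange_3$ and $\tange_4$ as non-zero irreducible covariants with target spaces $\mathrm{S}^3(\mc^3)^\star\cong V^{(3,3)}$ and $\mathrm{S}^3(\mc^3)\cong V^{(3,0)}$ respectively. The comorphism embeddings $\tange_3^\star$ and $\tange_4^\star$ then identify the $\mc$-span of the coordinate functions of $\tange_3$ with the irreducible $G$-module $\mathrm{S}^3(\mc^3)$, and the $\mc$-span of the coordinate functions of $\tange_4$ with $\mathrm{S}^3(\mc^3)^\star$; each has dimension $10$. Since $V^{(3,0)}$ and $V^{(3,3)}$ are non-isomorphic irreducible $G$-modules, their sum inside $\mc[\mat]$ is direct by Schur's Lemma, so the $20$ coordinate functions are linearly independent. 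By Proposition~\ref{prop:firstrelations} they all lie in $\ideal(\mat_1)_3$, and together with the previous paragraph they form a $\mc$-basis of it. This proves (ii), and the $G$-module decomposition (iii) is read off immediately.

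For (iv), the coordinate functions of $\mathcal{P}_1\colon \mat\to\bigwedge^3\mat$, $A\mapsto I_3\wedge A\wedge A^2$, are homogeneous of total degree $0+1+2=3$ and vanish on $\mat_1$, so they all belong to $\ideal(\mat_1)_3$. By Proposition~\ref{prop:coordfunctions}, the coordinate functions of $\tange_3$ and $\tange_4$ already lie in their $\mc$-span, and by (ii) those already span $\ideal(\mat_1)_3$; hence the coordinate functions of $\mathcal{P}_1$ span $\ideal(\mat_1)_3$ as well. No substantive obstacle is anticipated: the deeper structural work has been done in Corollary~\ref{cor:sl3stable} and Proposition~\ref{prop:firstrelations}, and (i)--(iv) reduce to tracking degrees and to basic highest weight representation theory of $SL(3,\mc)$.
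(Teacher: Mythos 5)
Your proof is correct and takes essentially the same route as the paper's: (i) from Corollary~\ref{cor:sl3stable} and the degree-$3$ homogeneity of $\tange_3^U,\tange_4^U$, (ii)–(iii) from the non-isomorphic irreducibility of the target $G$-modules $V^{(3,3)}$ and $V^{(3,0)}$, and (iv) from Proposition~\ref{prop:coordfunctions} together with (ii). You have simply spelled out the dimension count and the Schur's-lemma directness that the paper leaves implicit.
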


\begin{proof} Since $\tange_3^U$ and $\tange_4^U$ are homogeneous of degree three, it follows trivially from Corollary~\ref{cor:sl3stable} that $\ideal(\mat_1)$ is generated by its degree three homogeneous component, so (i)  is proved. 
The covariants $\tange_3$ and $\tange_4$ are non-zero, irreducible, and map $\mat$ into non-isomorphic $G$-modules. It follows that their coordinate functions are linearly independent, so both (ii) and (iii) hold by Corollary~\ref{cor:sl3stable} and by construction of $\tange_3$, $\tange_4$. 
Finally, (iv) follows from (ii) and Proposition~\ref{prop:coordfunctions}.  
\end{proof} 

\begin{remark} In \cite{kostant-wallach} for any complex simple Lie group $G$  the authors construct a $G$-submodule in the minimal degree non-zero homogeneous component of the vanishing ideal of the subset of singular elements in the Lie algebra of $G$, and determine its $G$-module structure. For the special case $G=SL(n,\mc)$ this subspace coincides with the space spanned by 
the coordinate functions of $\mathcal{P}_1$. 
\end{remark} 

%%%%%%%%%%%%%%%%%%%%%%%%%%%%%%%%%%%

\section{Hilbert series}\label{sec:hilbseries}

Following \cite{drensky-etal} we introduce the {\it graded multiplicity series} of $\mc[\mat_1]$ as follows: 
\begin{equation}\label{eq:multseries}
M(\mc[\mat_1];q_1,q_2,t):=\sum_{d=0}^\infty\sum_{\lambda\in\mathrm{Char}(T)}m(d,\lambda)q_1^{\lambda_1}q_2^{\lambda_2}t^d
\in\mz[q_1,q_2][[t]]
\end{equation}
where $m(d,\lambda)$ denotes the multiplicity of the irreducible $G$-module $V^{\lambda}$ as a summand in the degree $d$ homogeneous component 
of $\mc[\mat_1]$. 

\begin{corollary}\label{cor:multseries} 
We have the equality 
 \[M(\mc[\mat_1];q_1,q_2,t)=\frac{1-t+t^2+q_1^2q_2t^2-q_1^2q_2t^3}{(1-t)^2(1-q_1^2q_2t)}\]
\end{corollary}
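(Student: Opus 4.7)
The plan is to identify the graded multiplicity series $M(\mc[\mat_1];q_1,q_2,t)$ with the bigraded Hilbert series of $\mc[\mat_1]^U$ and then read off the latter from the explicit description of $\mc[\mat_1]^U$ in Theorem~\ref{thm:covsonF}. By highest weight theory, the multiplicity of $V^\lambda$ in $\mc[\mat_1]_d$ equals the dimension of the weight-$\lambda$ subspace of $\mc[\mat_1]^U_d$; indeed each irreducible $G$-submodule of $\mc[\mat_1]_d$ contributes exactly one weight vector of weight $\lambda$ to $\mc[\mat_1]^U$. Consequently
\[M(\mc[\mat_1];q_1,q_2,t)=\sum_{d,\lambda}\dim\bigl(\mc[\mat_1]^U_{d,\lambda}\bigr)q_1^{\lambda_1}q_2^{\lambda_2}t^d,\]
which is the Hilbert series of $\mc[\mat_1]^U$ with respect to the bigrading by $\mn_0\times\mathrm{Char}(T)$ described before Theorem~\ref{thm:covsonF}.

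Next I would record the bidegrees of the generators: $\overline{\inv_1}$ has bidegree $(1,(0,0))$, and in the isomorphism $\eta:\mc[z^2,z^3,D,zD]\to\mc[\tzmat_1]^U$ of Theorem~\ref{thm:covsonF}\,(ii) the indeterminates $z$ and $D$ must be assigned bidegrees $(1,(0,0))$ and $(1,(2,1))$ respectively (so that $z^2,z^3\mapsto\overline{\inv_2},\overline{\inv_3}$ and $D,zD\mapsto\overline{\tange_1^U},\overline{\tange_2^U}$ are bihomogeneous of the correct bidegrees, matching the weight $(2,1)$ of $\tzmat\cong V^{(2,1)}$). By the last paragraph in the proof of Theorem~\ref{thm:covsonF}, a $\mc$-basis of $\mc[\tzmat_1]^U$ is given by the images of the monomials $\{z^kD^\ell:(k,\ell)\in\mn_0^2\setminus\{(1,0)\}\}$.

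I would then compute directly. The bigraded Hilbert series of $\mc[\tzmat_1]^U$ equals
\[\sum_{(k,\ell)\in\mn_0^2}t^{k+\ell}q_1^{2\ell}q_2^\ell-t=\frac{1}{(1-t)(1-q_1^2q_2t)}-t,\]
the $-t$ accounting for the missing monomial $z$. Since $\mc[\mat_1]^U=\mc[\tzmat_1]^U[\overline{\inv_1}]$ is a polynomial extension by Theorem~\ref{thm:covsonF}\,(i), with $\overline{\inv_1}$ contributing the factor $1/(1-t)$, multiplying yields
\[M(\mc[\mat_1];q_1,q_2,t)=\frac{1}{1-t}\left(\frac{1}{(1-t)(1-q_1^2q_2t)}-t\right)=\frac{1-t(1-t)(1-q_1^2q_2t)}{(1-t)^2(1-q_1^2q_2t)}.\]
Expanding the numerator gives $1-t+t^2+q_1^2q_2t^2-q_1^2q_2t^3$, which is the asserted formula.

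The whole argument is essentially a bookkeeping exercise, so there is no genuine obstacle; the only subtlety is to correctly track the $T$-weights of the generators of $\mc[\tzmat_1]^U$ when transporting the bigrading from $\mc[\tzmat_1]^U$ to $\mc[z^2,z^3,D,zD]$, and to remember to excise the monomial $z$ (corresponding to the element $z\notin\mc[z^2,z^3,D,zD]$).
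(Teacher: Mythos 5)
Your proof is correct and follows essentially the same route as the paper's: both identify $M(\mc[\mat_1];q_1,q_2,t)$ with the bigraded Hilbert series of $\mc[\mat_1]^U$, use the basis $\overline{\inv_1}^i\eta(z^jD^k)$ with $(j,k)\ne(1,0)$ furnished by Theorem~\ref{thm:covsonF}\,(i)--(ii), and track the bidegree $(i+j+k,(2k,k))$ to sum the series. The only cosmetic difference is that you first assemble the Hilbert series of $\mc[\tzmat_1]^U$ and then multiply by the $\overline{\inv_1}$ factor, whereas the paper sums over all three indices at once; the resulting rational function and its simplification agree.
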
 

\begin{proof} By the discussion at the beginning of Sections~\ref{sec:general} and  \ref{sec:tange},   
$M(\mc[\mat_1];q_1,q_2,t)$ is nothing but the bigraded Hilbert series of $\mc[\mat_1]^U$ (with respect to the bigrading by $\mn_0\times\mathrm{Char}(T)$ introduced before 
Theorem~\ref{thm:covsonF}).  
By Theorem~\ref{thm:covsonF} (i) and (ii), $\overline{\inv_1}^i\eta(z^jD^k)$ where $i,j,k\in\mn_0$, $(j,k)\ne (1,0)$ is a $\mc$-vector space basis in 
$\mc[\mat_1]^U$, and the basis element corresponding to $(i,j,k)$ is bihomogeneous of  bidegree $(i+j+k,(2k,k))$. 
Consequently, $M(\mc[\mat_1];q_1,q_2,t)=\frac{1}{(1-t)^2(1-q_1^2q_2t)}-\frac{t}{1-t}$. 
\end{proof} 

The \emph{Hilbert series}  of a multigraded vector space in general is the generating function  of the dimensions of its multihomogeneous components. 
In particular,  the  Hilbert series of the bigraded algebra  $\mc[\mat_1]$ is 
\begin{equation}\label{eq:hilbertseries}
H(\mc[\mat_1];q_1,q_2,t):=\sum_{d=0}^\infty\sum_{\lambda\in\mathrm{Char}(T)}a(d,\lambda)q_1^{\lambda_1}q_2^{\lambda_2}t^d
\in\mz[q_1^{\pm 1},q_2^{\pm 1}][[t]]
\end{equation} 
where $a(d,\lambda)$ is the multiplicity of the $1$-dimensional $T$-module with character $\lambda$ in the degree $d$ homogeneous component of 
$\mc[\mat_1]$. 
The series \eqref{eq:multseries} and \eqref{eq:hilbertseries} are related by 
\[H(\mc[\mat_1];q_1,q_2,t)=\sum_{d=0}^\infty\sum_{\lambda\in\mathrm{Char}(T)}m(d,\lambda)\tr(\mathrm{diag}(q_1,q_2,q_1^{-1}q_2^{-1})\big\vert_{V^{\lambda}})t^d\]
where $\mathrm{diag}(q_1,q_2,q_1^{-1}q_2^{-1})\big\vert_{V^{\lambda}}$ is the linear transformation of $V^{\lambda}$ corresponding to 
$\mathrm{diag}(q_1,q_2,q_1^{-1}q_2^{-1})\in T\subset G$ under the representation  on $V^{\lambda}$. 
Setting $q_3:=q_1^{-1}q_2^{-1}$ and denoting by $S_3$ the symmetric group of degree $3$, we have 
\[\tr(\mathrm{diag}(q_1,q_2,q_1^{-1}q_2^{-1})\big\vert_{V^{\lambda}})=
\frac{\sum_{\pi\in S_3}\mathrm{sign}(\pi)q_{\pi(1)}^{\lambda_1+2}q_{\pi(2)}^{\lambda_2+1}}{(q_1-q_2)(q_1-q_3)(q_2-q_3)}\]
Consequently, still using the notation $q_3:=q_1^{-1}q_2^{-1}$ we have 
\[H(\mc[\mat_1];q_1,q_2,t)=\sum_{\pi\in S_3}\mathrm{sign}(\pi)\frac{q_{\pi(1)}^2q_{\pi(2)}M(\mc[\mat_1];q_{\pi(1)},q_{\pi(2)},t)}{\prod_{1\le i<j\le 3}(q_i-q_j)}\]
from which (after substituting $q_1=q_2=1$) one can easily compute the ordinary Hilbert series 
\[H(\mc[\mat_1];t):=\sum_{d=0}^\infty \dim_{\mc}(\mc[\mat_1]_d)t^d\] 
where $\mc[\mat_1]_d$ stands for the degree $d$ homogeneous component of the graded algebra $\mc[\mat_1]$:   

\begin{corollary}\label{cor:hilbertseries} 
We have the equality 
\[H(\mr[\hermat(3)_1];t)=H(\mc[\mat_1];t)=\frac{1+3t+6t^2-10t^3+10t^4-5t^5+t^6}{(1-t)^6}\] 
\end{corollary}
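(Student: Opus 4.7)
The first equality $H(\mr[\hermat(3)_1]; t) = H(\mc[\mat_1]; t)$ is immediate from Corollary~\ref{cor:idealofclosure}, which identifies $\mc[\mat_1]$ with the complexification $\mc \otimes_\mr \mr[\hermat(3)_1]$; complexification preserves the grading and the $\mc$-dimensions of homogeneous components, so the two Hilbert series coincide.

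For the second equality, the cleanest route is to bypass the $q_1 = q_2 = 1$ limit in $H(\mc[\mat_1]; q_1, q_2, t)$ (where numerator and denominator vanish to order three on the diagonal) and instead read the ordinary Hilbert series directly from the monomial basis of $\mc[\mat_1]^U$ produced in the proof of Theorem~\ref{thm:covsonF}. Explicitly, the elements $\overline{\inv_1}^i \, \eta(z^j D^k)$ with $i, j, k \in \mn_0$ and $(j, k) \neq (1, 0)$ form a $\mc$-basis of $\mc[\mat_1]^U$, each of bidegree $(i + j + k, (2k, k))$. Consequently only the irreducibles $V^{(2k, k)}$ with $k \geq 0$ appear in $\mc[\mat_1]$, and Weyl's dimension formula gives $\dim V^{(2k, k)} = \frac{1}{2}(k+1)(k+1)(2k+2) = (k+1)^3$.

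It then remains to evaluate
\begin{equation*}
H(\mc[\mat_1]; t) = \sum_{k \geq 0} (k+1)^3 \sum_{d \geq 0} m(d, (2k, k)) \, t^d.
\end{equation*}
The inner generating function equals $t^k / (1-t)^2$ for $k \geq 1$; for $k = 0$ the excluded pair $(j, k) = (1, 0)$ shaves off a correction $-t/(1-t)$, yielding $(1 - t + t^2)/(1-t)^2$. Using the standard identity $\sum_{k \geq 0} (k+1)^3 t^k = (1 + 4t + t^2)/(1-t)^4$, one combines the contributions over the common denominator $(1-t)^6$ and verifies that the numerator simplifies to $1 + 3t + 6t^2 - 10t^3 + 10t^4 - 5t^5 + t^6$, as claimed.

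The only obstacle is bookkeeping: correctly accounting for the exclusion of the basis element with $(j, k) = (1, 0)$ (which affects only the trivial-isotypic summand) and carrying out the polynomial arithmetic over the common denominator $(1-t)^6$. There is no conceptual difficulty. One could alternatively attack the problem by taking the threefold L'Hôpital limit in the explicit character formula preceding the statement, but this requires a careful Taylor expansion of both numerator and denominator near $q_1 = q_2 = 1$ along two independent directions, and the monomial-basis approach above avoids this altogether.
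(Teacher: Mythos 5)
Your argument is correct, and it takes a genuinely different (and cleaner) route than the one the paper sketches. The paper derives the two-variable character series $H(\mc[\mat_1];q_1,q_2,t)$ from the multiplicity series via the Weyl character formula, with the singular denominator $\prod_{i<j}(q_i-q_j)$, and then instructs the reader to substitute $q_1=q_2=1$; since that substitution is a $0/0$ indeterminate form, it actually requires a careful limit or algebraic cancellation. You instead go back to the monomial basis $\overline{\inv_1}^i\,\eta(z^jD^k)$ of $\mc[\mat_1]^U$ produced in the proof of Theorem~\ref{thm:covsonF}, observe that the only irreducibles occurring are $V^{(2k,k)}$, and convert multiplicities to dimensions directly via Weyl's dimension formula $\dim V^{(2k,k)}=(k+1)^3$. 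Your bookkeeping checks out: the generating function for the multiplicity of $V^{(2k,k)}$ is $t^k/(1-t)^2$ for $k\ge 1$ and $(1-t+t^2)/(1-t)^2$ for $k=0$ (the $-t/(1-t)$ correction coming from the excluded pair $(j,k)=(1,0)$), and summing $\sum_{k\ge 0}(k+1)^3 t^k/(1-t)^2 - t/(1-t)$ with the identity $\sum_{k\ge0}(k+1)^3t^k=(1+4t+t^2)/(1-t)^4$ indeed yields $\bigl(1+4t+t^2 - t(1-t)^5\bigr)/(1-t)^6 = (1+3t+6t^2-10t^3+10t^4-5t^5+t^6)/(1-t)^6$. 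The first equality $H(\mr[\hermat(3)_1];t)=H(\mc[\mat_1];t)$ is correctly reduced to Corollary~\ref{cor:idealofclosure}. What the paper's approach buys is uniformity with the framework of Section~\ref{sec:hilbseries}, where the refined bigraded Hilbert series is the object of interest; what your approach buys is that it avoids the threefold singular limit entirely and exposes the structure of the answer (only weights $(2k,k)$ occur, each with the transparent dimension $(k+1)^3$), making the final polynomial arithmetic both shorter and easier to check.
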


%%%%%%%%%%%%%%%%%%%%%%%%%%%%%%%

\section{Symmetric  $3\times 3$ matrices}\label{sec:symmetric} 

In this section set $\mat:=\symmat(3,\mc)$ and $G:=SO(3,\mc)$ acting by conjugation on $\mat$. Again denote $\tzmat$ the subset of trace zero 
matrices in $\mat$, and $\tzmat_1:=\mat_1\cap\tzmat$. We may restrict the covariants on $\mc^{3\times 3}$ introduced in Section~\ref{sec:tange} to its subspace $\mat$ of symmetric complex $3\times 3$ matrices; we keep the same notation $\inv_i$, $\tange_j$ for the resulting 
$G$-equivariant polynomial maps on $\mat$. An essential difference compared to the case of $\mc^{3\times 3}$ is that now $\tange_4=\tange_3$. 
Moreover, $\mathrm{S}^3(\mc^3)^\star$ is not an irreducible $\mat_1$-module. 
The maximal torus $T$ in $\mat_1$ has rank $1$, i.e. $T\cong \mc^\times$, so $\mathrm{Char}(T)=\mz$, where the character $T\to\mc^\times$, $t\mapsto t^n$ is identified with $n\in \mz$. The possible highest weights are the non-negative integers, we shall denote by $V^{(n)}$ the irreducible $G$-module with highest weight $n$; 
it has dimension $2n+1$, and for $t\in T=\mc^\times$ we have $\tr(t\big\vert_{V^{(n)}})=t^n+t^{n-1}+\dots+t^{-n}$.   
With this notation we have 
\[\mathrm{S}^3(\mc^3)^\star= V^{(3)}+V^{(1)}\] 
where $V^{(3)}$ is the kernel of the Laplace operator $\Delta:=\sum_{i=1}^3\frac{\partial ^2}{\partial  x_i^2}$ restricted to $\mathrm{S}^3(\mc^3)^\star$. 

\begin{proposition}\label{prop:laplace} 
The covariant $\tange_3$ is non-zero and maps $\mat$ into $\ker(\Delta\big\vert_{\mathrm{S}^3(\mc^3)^\star})$.  
\end{proposition}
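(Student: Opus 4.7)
The plan is to verify both assertions by reducing to the case of a diagonal matrix, where everything becomes transparent, and then using $G$-equivariance together with density.

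First I would compute $\tange_3$ on a diagonal matrix $A=\mathrm{diag}(\lambda_1,\lambda_2,\lambda_3)$. A direct computation gives
\[\det(\underline{x}\mid A\underline{x}\mid A^2\underline{x})=x_1x_2x_3\det\begin{pmatrix}1 & \lambda_1 & \lambda_1^2\\ 1 & \lambda_2 & \lambda_2^2\\ 1 & \lambda_3 & \lambda_3^2\end{pmatrix}=x_1x_2x_3\prod_{1\le i<j\le 3}(\lambda_j-\lambda_i).\]
Taking any diagonal $A\in\mat$ with pairwise distinct $\lambda_i$ shows that $\tange_3(A)\ne 0$, so $\tange_3$ is non-zero (this is also a specialization of the argument given after~\eqref{eq:tange6}). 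Moreover, since $\Delta(x_1x_2x_3)=0$, the value $\tange_3(A)$ lies in $\ker(\Delta\big\vert_{\mathrm{S}^3(\mc^3)^\star})$ whenever $A$ is diagonal.

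Next I would observe that the Laplace operator $\Delta=\sum_{i=1}^3\partial^2/\partial x_i^2$ is, up to a scalar, the constant-coefficient differential operator dual to the $SO(3,\mc)$-invariant quadratic form $\sum x_i^2$. Hence $\Delta$ commutes with the action of $G=SO(3,\mc)$ on $\mathrm{S}^3(\mc^3)^\star$, so its kernel $V^{(3)}$ is a $G$-submodule. Since $\tange_3$ is $G$-equivariant, for any $g\in G$ and any diagonal $A$ we have $\tange_3(gAg^{-1})=g\cdot\tange_3(A)\in g\cdot V^{(3)}=V^{(3)}$. Thus $\tange_3$ sends every orthogonally diagonalizable matrix into $V^{(3)}$.

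Finally, by Proposition~\ref{prop:zariskidense2} (applied with $k=0$) the orthogonally diagonalizable elements form a Zariski dense subset of $\mat=\symmat(3,\mc)$. Since $\tange_3$ is a polynomial map and $V^{(3)}\subset \mathrm{S}^3(\mc^3)^\star$ is a Zariski closed (in fact linear) subspace, the preimage $\tange_3^{-1}(V^{(3)})$ is Zariski closed in $\mat$; containing a dense subset, it must equal $\mat$. This gives the second assertion, and the proof is complete.

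The main (and really only) potential obstacle is the verification that $\Delta$ is $SO(3,\mc)$-equivariant; but this is immediate from the fact that $G$ is defined as the stabilizer of $\sum x_i^2$, so there is no genuine difficulty, and the bulk of the argument is the one-line computation on diagonal matrices combined with the density result already established in Section~\ref{sec:zariski}.
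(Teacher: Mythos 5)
Your proof is correct and follows essentially the same route as the paper: compute $\tange_3$ on diagonal matrices, observe that $x_1x_2x_3$ is harmonic, and propagate to all of $\mat$ via $G$-equivariance of $\Delta$ and the Zariski density from Proposition~\ref{prop:zariskidense2}. The only (welcome) difference is that you spell out the use of $k=0$ and the closedness of $\tange_3^{-1}(V^{(3)})$ explicitly, whereas the paper compresses this into a single sentence.
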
 

\begin{proof} 
Since $\Delta$ is a $G$-equivariant operator, and by Proposition~\ref{prop:zariskidense2} there is a Zariski dense subset in $\mat_1$ consisting of  
$G$-orbits of diagonal matrices, it suffices to show that $\Delta(\tange_3(A))=0$ for any diagonal $A\in \mat$. Now  
we have $\tange_3(\mathrm{diag}(a_1,a_2,a_3))=(a_2-a_1)(a_3-a_1)(a_3-a_2)x_1x_2x_3$. This shows that $\tange_3$ is non-zero, and since $\Delta(x_1x_2x_3)=0$, 
the second claim also follows.  
\end{proof} 

From now on we shall view $\tange_3$ as an irreducible covariant $\tange_3:\mat\to V^{(3)}$. 
Moreover,  
$\tange_1,\tange_2:\mat\to\tzmat\cong V^{(2)}$ 
are independent irreducible covariants. 
The covariant $\tange_1$ induces an embedding of $\mc[\tzmat]$ as a subalgebra of $\mc[\mat]$. 

Denote by $U$ a maximal unipotent subgroup of $\mat_1$ normalized by $T$. 
The algebra of covariants on $\mat$ is known classically from the theory of covariants of binary forms. 
The result in our notation can be stated as follows: 

\begin{proposition}\label{prop:formcovariants} 
(i) The algebra $\mc[\mat]^U$ is a polynomial ring generated by $\inv_1$ over $\mc[\tzmat]^U$. 

(ii) The algebra $\mc[\tzmat]^U$ is generated by $\inv_2$, $\inv_3$, $\tange_1$, $\tange_2$, $\tange_3$. 
\end{proposition}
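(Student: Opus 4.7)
The plan is in two parts. Part (i) is a routine consequence of a $G$-module decomposition, while part (ii) will be a transcription of a classical theorem from the invariant theory of binary forms.

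For (i): as a $G=SO(3,\mc)$-module, $\mat=\symmat(3,\mc)$ decomposes orthogonally with respect to the trace bilinear form as $\mat=\mc I_3\oplus \tzmat$, where the first summand is the trivial $G$-module (conjugation fixes $I_3$) and $\tzmat$ is the five-dimensional irreducible $G$-module $V^{(2)}$. This gives an isomorphism $\mc[\mat]\cong\mc[\tzmat][\inv_1]$ of $G$-algebras with $\inv_1=\tr$ being $G$-invariant. Applying the exact functor of $U$-invariants yields $\mc[\mat]^U=\mc[\tzmat]^U[\inv_1]$, and $\inv_1$ remains algebraically independent over $\mc[\tzmat]^U$ since it is transcendental over the larger ring $\mc[\tzmat]$.

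For (ii): I would pull back the situation via the exceptional isogeny $SL(2,\mc)\twoheadrightarrow PSL(2,\mc)\cong SO(3,\mc)$, noting that the adjoint representation $\tzmat\cong V^{(2)}_{SO(3)}$ corresponds to the $SL(2,\mc)$-module $\mathrm{Sym}^4(\mc^2)\cong V^{(4)}_{SL(2)}$ of binary quartic forms (the kernel $\{\pm I\}$ acts trivially on every even symmetric power of $\mc^2$), while $V^{(3)}_{SO(3)}$ corresponds to the binary sextics $V^{(6)}_{SL(2)}$. Under this dictionary, $\mc[\tzmat]^U$ is identified with the classical \emph{algebra of covariants of the binary quartic}, where now $U$ denotes the upper triangular unipotent subgroup of $SL(2,\mc)$. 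Gordan's theorem (see, e.g., Olver's \emph{Classical Invariant Theory} or Grace and Young) asserts that this algebra is generated by five fundamental elements: the form itself, the Hessian, the Jacobian of the form and its Hessian, together with two algebraically independent invariants (of degrees $2$ and $3$ in the coefficients of the quartic).

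The remaining task is to identify these five classical generators with (nonzero scalar multiples of) $\tange_1^U$, $\tange_2^U$, $\tange_3^U$, $\inv_2$, $\inv_3$. Matching degrees and target irreducible $G$-modules forces the identification: $\tange_1\big\vert_{\tzmat}$ is the identity map $V^{(2)}\to V^{(2)}$, corresponding to the form itself; $\tange_2$ is a degree $2$ covariant into $\tzmat=V^{(2)}$, matching the Hessian; $\tange_3$ is a degree $3$ covariant into $V^{(3)}$, matching the Jacobian (see Proposition~\ref{prop:laplace}); and $\inv_2$, $\inv_3$ are the two fundamental invariants. Non-vanishing of each of our covariants must be verified (which is exactly Proposition~\ref{prop:laplace} for $\tange_3$, and can be checked by evaluating on a generic traceless diagonal matrix $\mathrm{diag}(a_1,a_2,a_3)$ for the others). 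The main obstacle is precisely this bookkeeping step; the structural content of (ii) rests entirely on Gordan's classical theorem.
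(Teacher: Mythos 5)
Your proof takes essentially the same route as the paper: both reduce (ii) to the classical theory of covariants of the binary quartic via the isogeny $SL(2,\mc)\to PSL(2,\mc)\cong SO(3,\mc)$, and match $\tange_1,\tange_2,\tange_3,\inv_2,\inv_3$ with the form, its Hessian, the Jacobian of the form and its Hessian, and the two fundamental invariants. If anything your version is slightly more explicit, since the paper's proof omits any argument for part (i) and simply asserts the identification of the covariants, whereas you supply the $\mat=\mc I_3\oplus\tzmat$ decomposition and the degree/target-module matching that forces the correspondence.
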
 

\begin{proof} 
Recall the well-known isomorphism $SO(3,\mc)\cong SL(2,\mc)/\{\pm I_2\}$, so $G$-modules  can be thought of as representations of  the special linear group 
$SL(2,\mc)$ with $-I_2$ in the kernel. 
This way 
the conjugation action of $G$ on $\tzmat$  can be  identified with 
the natural $SL(2,\mc)$-representation on the space of binary quartic forms.  
Generators (and relations) for the algebra of covariants of binary quartics were determined in nineteenth century invariant theory (see e.g. \cite{grosshans} or \cite{olver}).    
There are two algebraically independent invariants, one of degree $2$ and $3$. 
The covariant $\tange_2$ corresponds to the \emph{Hessian} 
covariant $\mathrm{Hess}$ mapping the binary quartic $Q=\sum_{i=0}^4a_ix^iy^{4-i}$ to the binary quartic 
$\mathrm{Hess}(Q):=\det(\left(\begin{array}{cc}\partial_{xx}Q & \partial_{xy}Q \\\partial_{yx}Q & \partial_{yy} Q\end{array}\right))$. 
The covariant  $\tange_3$ corresponds to the  map  sending the binary quartic 
$Q$ to the Jacobian of $Q$ and its Hessian, which is the binary sextic  
$\mathrm{Jac}(Q,\mathrm{Hess}(Q)):=\det(\left(\begin{array}{cc}\partial_{x}Q & \partial_{x}\mathrm{Hess}(Q) \\\partial_{y}Q & \partial_{y}\mathrm{Hess}(Q)\end{array}\right))$. 
\end{proof}

Similarly to Section~\ref{sec:tange},  write $\overline{\inv_i}$, $\overline{\tange_j}$ for the restriction to $\mat_1$ of $\inv_i$, $\tange_j$. Since $\mat$, $\mat_1$, $\tzmat$, $\tzmat_1$ are all subsets of the set denoted by the same symbol in Section~\ref{sec:tange}, and $\overline{\inv_i}$ and $\overline{\tange_j}$ are restrictions of the corresponding functions from Section~\ref{sec:tange}, as a corollary 
of Proposition~\ref{prop:firstrelations} we obtain that exactly the same relations hold with the  new scenario. 
Moreover, the statement of Theorem~\ref{thm:covsonF} remains valid, with verbatim the same proof. 

\begin{corollary} \label{cor:so3stable} 
As an $SO(3,\mc)$-stable ideal, $\ideal(\mat_1)$ is generated by $\tange_3^U$. 
\end{corollary}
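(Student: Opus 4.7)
The plan is to mirror the proof of Corollary~\ref{cor:sl3stable} by applying Lemma~\ref{lemma:logic}, with the modifications dictated by the symmetric setting. The algebra $\mc[\mat]^U$ is now generated by the six elements $\inv_1, \inv_2, \inv_3, \tange_1^U, \tange_2^U, \tange_3^U$ (Proposition~\ref{prop:formcovariants}); there is no separate generator $\tange_4^U$ because $\tange_4 = \tange_3$ on symmetric matrices. By Proposition~\ref{prop:firstrelations}, which applies verbatim to this $\mat$, the restriction $\overline{\tange_3}$ vanishes on $\mat_1$ and hence so does $\overline{\tange_3^U}$. Combined with the presentation of $\mc[\mat_1]^U$ furnished by the analog of Theorem~\ref{thm:covsonF} (which the text asserts is proved verbatim in this setting), the kernel of the surjection $\mc[x_0,x_1,x_2,x_3,x_4,x_5]\to\mc[\mat_1]^U$ sending $x_0,\dots,x_4$ to $\overline{\inv_1},\overline{\inv_2},\overline{\inv_3},\overline{\tange_1^U},\overline{\tange_2^U}$ and $x_5\mapsto\overline{\tange_3^U}$ is generated by $x_5$ together with the four polynomials from Theorem~\ref{thm:covsonF}(iii).

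Applying Lemma~\ref{lemma:logic} then yields that $\ideal(\mat_1)$ is generated as a $G$-stable ideal by $\tange_3^U$ together with the four lifts $\inv_2^3 - \inv_3^2$, $\inv_2 \tange_2^U - \inv_3 \tange_1^U$, $(\tange_2^U)^2 - \inv_2 (\tange_1^U)^2$, and $\inv_3 \tange_2^U - \inv_2^2 \tange_1^U$ of those four relations. To conclude, I would check that these four elements already lie in the $G$-stable ideal generated by $\tange_3^U$. Since the $G$-module generated by $\tange_3^U$ is isomorphic to $V^{(3)}$ and is spanned by the seven coordinate functions of $\tange_3: \mat \to V^{(3)}$, it suffices to show that each of the four elements lies in the ordinary ideal of $\mc[\mat]$ generated by those seven coordinate functions.

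The main obstacle is this last verification, which is the direct symmetric analog of the CoCoA computation invoked in the proof of Corollary~\ref{cor:sl3stable}. I expect that the same kind of Gr\"obner basis computation goes through, now performed in the six-variable polynomial ring $\mc[\symmat(3,\mc)]$ rather than the nine-variable ring $\mc[\mc^{3\times 3}]$, so if anything the calculation is smaller and more tractable. A theoretical shortcut via restriction from the $SL(3,\mc)$ setting is tempting but awkward: the highest weight vectors $\tange_j^U$ in the present $SO(3,\mc)$ setting are different polynomials from their $SL(3,\mc)$ counterparts (because the maximal unipotent subgroup $U$ is different), so the four lifts are not literally restrictions of the four lifts from Section~\ref{sec:tange}. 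Once the ideal-membership check is done, Lemma~\ref{lemma:logic} closes the argument.
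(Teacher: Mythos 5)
Your structure is the same as the paper's: apply Lemma~\ref{lemma:logic} to reduce to checking that the four lifts lie in the ordinary ideal of $\mc[\mat]$ generated by the coordinate functions of $\tange_3$. The only divergence is that last membership check, where you propose a fresh Gr\"obner computation in the six-variable ring $\mc[\symmat(3,\mc)]$, while the paper obtains it by restriction from Corollary~\ref{cor:sl3stable}. Your reason for rejecting the restriction shortcut --- that $\tange_j^U$ is a different polynomial in the $SO(3,\mc)$ setting than in the $SL(3,\mc)$ setting because $U$ differs, so the four lifts are not literal restrictions --- is a correct observation but not a real obstacle. Since every ideal involved is $G$-stable, one may replace any highest weight vector by any other nonzero element of the irreducible $G$-submodule it generates. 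In $\mc[\mc^{3\times 3}]$ the ideal generated by the coordinate functions of $\tange_3$ is $SL(3,\mc)$-stable, hence contains the entire $SL(3,\mc)$-module generated by each of the four highest weight vectors, i.e.\ all the coordinate functions of the underlying covariant identities such as $\inv_2\tange_2-\inv_3\tange_1$. Those coordinate functions restrict to the corresponding coordinate functions on $\symmat(3,\mc)$, whose span is the $SO(3,\mc)$-irreducible containing the $SO(3,\mc)$-highest weight vector you want; the invariant $\inv_2^3-\inv_3^2$ needs no argument since it restricts literally. So the shortcut does close the argument once you work with the whole covariant relations rather than with their individual highest weight coordinates. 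Your alternative direct computation is valid and somewhat smaller, so the proof goes through either way; I am only flagging that the paper's route is not defective.
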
 
 
 \begin{proof} We apply Lemma~\ref{lemma:logic}: by Proposition~\ref{prop:formcovariants} and the new versions of  Proposition~\ref{prop:firstrelations} and 
Theorem \ref{thm:covsonF} discussed in the above paragraph we conclude that  $\ideal(\mat_1)$ is generated as a $G$-stable ideal by $\tange_3^U$, 
 $\inv_2^3-\inv_3^2$, $\inv_2\tange_2^U-\inv_3\tange_1^U$, $(\tange_2^U)^2-\inv_2(\tange_1^U)^2$, 
$\inv_3\tange_2^U-\inv_2^2\tange_1^U$. 
We know already from  Corollary~\ref{cor:sl3stable} that the elements of $\mc[\mc^{3\times 3}]$ denoted by the same symbols as the  latter four elements are contained in the ideal generated by 
the coordinate functions of $\tange_3$ (defined on $\mc^{3\times 3}$). Applying the natural surjection $\mc[\mc^{3\times 3}]\to \mc[\mat]$ given by restriction of functions to $\mat\subset \mc^{3\times 3}$ we conclude that the ideal generated by the coordinate functions of $\tange_3$ (interpreted as a covariant on $\mat$) contain the latter four elements of $\mc[\mat]$. So our statement follows, since the coordinate functions of $\tange_3$ span the $SO(3,\mc)$-module generated by $\tange_3^U$. 
 \end{proof}

\begin{corollary}\label{cor:main3}
(i) The ideal $\ideal(\mat_1)$ is generated by its degree $3$  component $\ideal(\mat_1)_3$. 

(ii) The $7$  coordinate functions of $\tange_3:\mat\to V^{(3)}$ constitute a $\mc$-basis in $\ideal(\mat_1)_3$.  

(iii) As an $SO(3,\mc)$-module, $\ideal(\mat_1)_3\cong V^{(3)}$, the space of $3$-variable spherical harmonics of degree $3$. 

(iv)  The coordinate functions of $\mathcal{P}_1:\mat\to\bigwedge^3\mat$ span $\ideal(\mat_1)_3$. 
\end{corollary}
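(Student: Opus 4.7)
The plan is to mirror the argument used for Corollary~\ref{cor:main2}, exploiting Corollary~\ref{cor:so3stable} as the main input. First I would handle (i) and (ii) together. By Corollary~\ref{cor:so3stable}, $\ideal(\mat_1)$ is generated as a $G$-stable ideal by $\tange_3^U$, which is bihomogeneous of degree $3$. Since $\tange_3$ is irreducible (mapping into the irreducible $G$-module $V^{(3)}$ by Proposition~\ref{prop:laplace}), the $G$-submodule generated by $\tange_3^U$ inside $\mc[\mat]$ is precisely $\tange_3^\star(V^{(3)\star})$, the linear span of the coordinate functions of $\tange_3$. This submodule sits entirely in the degree $3$ homogeneous component of $\mc[\mat]$, so the ordinary ideal it generates coincides with the ideal generated by $\ideal(\mat_1)_3$, which proves (i). For (ii), since $\tange_3$ is a non-zero irreducible covariant into $V^{(3)}$, its coordinate functions are linearly independent, and they are $\dim V^{(3)} = 2\cdot 3+1 = 7$ in number.

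For (iii), I would use the fact that $SO(3,\mc)$-modules are self-dual (since every irreducible $SO(3,\mc)$-module $V^{(n)}$ is isomorphic to its dual), so the $G$-module $\tange_3^\star(V^{(3)\star}) \cong V^{(3)\star} \cong V^{(3)}$. By construction this is realized as the kernel of the Laplacian on $\mathrm{S}^3(\mc^3)^\star$ (Proposition~\ref{prop:laplace}), which is exactly the space of degree $3$ spherical harmonics in three variables. Combined with (ii), this identifies $\ideal(\mat_1)_3$ with this harmonic space.

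For (iv), I would invoke Proposition~\ref{prop:coordfunctions}: the coordinate functions of $\tange_3$ (as a covariant on $\mc^{3\times 3}$) lie in the span of the coordinate functions of $\mathcal{P}_1:\mc^{3\times 3}\to\bigwedge^3 \mc^{3\times 3}$. Restricting all functions to the subspace $\mat = \symmat(3,\mc) \subset \mc^{3\times 3}$ (and recalling that on $\mat$ the covariant $\tange_4$ coincides with $\tange_3$, as noted at the beginning of Section~\ref{sec:symmetric}), the coordinate functions of $\mathcal{P}_1$ on $\mat$ span a subspace containing the coordinate functions of $\tange_3$. Those coordinate functions of $\mathcal{P}_1$ lie in $\ideal(\mat_1)_3$, so by (ii) they must span it.

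The only step that requires any care is the self-duality claim used in (iii) and the identification of the target of $\tange_3$ with the harmonics; both are routine but deserve explicit mention. No serious obstacle is expected, since the real work has been done in Corollary~\ref{cor:so3stable}; the present statement is essentially a translation into module-theoretic and geometric language.
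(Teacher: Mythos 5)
Your proposal is correct and follows essentially the same route as the paper's proof: (i) from the degree-$3$ homogeneity of $\tange_3^U$ together with Corollary~\ref{cor:so3stable}, (ii) and (iii) from irreducibility and the construction of $\tange_3$ (the paper leaves the self-duality of $SO(3,\mc)$-modules implicit, which you make explicit), and (iv) from (ii) together with Proposition~\ref{prop:coordfunctions} via restriction to $\symmat(3,\mc)$.
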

 
 \begin{proof} Since $\tange_3^U$ is homogeneous of degree $3$, it follows trivially from Corollary~\ref{cor:so3stable} that $\ideal(\mat_1)$ is generated by its degree three homogeneous component, so (i)  follows. 
The covariant $\tange_3$ is non-zero and  irreducible, hence its coordinate  functions are linearly independent, so both (ii) and (iii) hold by Corollary~\ref{cor:so3stable} and by construction of $\tange_3$. 
Finally, (iv) follows from (ii) and Proposition ~\ref{prop:coordfunctions}.  
\end{proof} 

The {\it multiplicity series} of $\mc[\mat_1]$ is 
\[M(\mc[\mat_1];q,t)=\sum_{d=0}^\infty \sum_{n=0}^\infty m(d,n)q^nt^d\] 
where $m(d,n)$ denotes the multiplicity of the irreducible $SO(3,\mc)$-module $V^{(n)}$ as a summand in $\mc[\mat_1]_d$.  
The present variant of Theorem~\ref{thm:covsonF} yields 
\begin{equation}\label{eq:multiplicityseries} 
M(\mc[\mat_1];q,t)=\frac{1}{1-t}\left(\frac{1}{(1-t)(1-q^2t)}-t\right)
\end{equation}
The trace of $q\in T\in \mc^\times$ acting on $V^{(n)}$ is $\frac{q^{1/2}q^n-q^{-1/2}q^{-n}}{q^{1/2}-q^{-1/2}}$, hence the Hilbert series of 
$\mc[\mat_1]$ bigraded by $\mn_0\times \mathrm{Char}(T)$ is 
\[H(\mc[\mat_1];q,t)=\frac{q^{1/2}M(\mc[\mat_1];q,t)-q^{-1/2}M(\mc[\mat_1];q^{-1},t)}{q^{1/2}-q^{-1/2}}\] 

\begin{corollary}\label{cor:hilbseriessym}
The Hilbert series $H(\mc[\mat_1];t):= \sum_{d=0}^\infty \dim_{\mc}(\mc[\mat_1]_d)t^d$ equals 
\[H(\mr[\symmat(3,\mr)_1];t)=H(\mc[\mat_1];t)=\frac{1+2t+3t^2-3t^3+t^4}{(1-t)^4}.\]  
\end{corollary}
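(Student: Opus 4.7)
The plan is first to reduce from the real setting to the complex one: by Corollary~\ref{cor:idealofclosure}, $\mc\otimes_{\mr}\mr[\symmat(3,\mr)_1]\cong\mc[\mat_1]$, so the real dimension of each graded piece on the left equals the complex dimension of the corresponding piece on the right. This yields $H(\mr[\symmat(3,\mr)_1];t)=H(\mc[\mat_1];t)$, so it suffices to compute the latter.

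Next I plan to obtain the ordinary Hilbert series from the bigraded expression
$$H(\mc[\mat_1];q,t)=\frac{q^{1/2}M(\mc[\mat_1];q,t)-q^{-1/2}M(\mc[\mat_1];q^{-1},t)}{q^{1/2}-q^{-1/2}}$$
displayed just above the corollary by setting $q=1$. Since both numerator and denominator vanish at $q=1$, I would apply L'H\^opital's rule in the variable $q$. Differentiating the denominator gives $1$ at $q=1$, while differentiating the numerator and evaluating at $q=1$ produces $M(\mc[\mat_1];1,t)+2\,M_q(\mc[\mat_1];1,t)$, where $M_q:=\partial M/\partial q$ (the two contributions match up because the $q^{-1/2}$ factor in the second term and the inner $q^{-1}$ each contribute a minus sign that cancels the overall minus in front).

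Finally, using \eqref{eq:multiplicityseries}, direct substitution gives $M(\mc[\mat_1];1,t)=\frac{1-t+2t^2-t^3}{(1-t)^3}$, and differentiating the factor $1/(1-q^2t)$ with respect to $q$ and evaluating at $q=1$ gives $M_q(\mc[\mat_1];1,t)=\frac{2t}{(1-t)^4}$. Combining over the common denominator $(1-t)^4$ and expanding $(1-t+2t^2-t^3)(1-t)+4t=1+2t+3t^2-3t^3+t^4$ yields the stated rational function. The only real obstacle is bookkeeping: tracking the three minus signs that appear when differentiating $q^{-1/2}M(\mc[\mat_1];q^{-1},t)$ with respect to $q$; once these are handled correctly, the remaining computation is a one-line algebraic simplification.
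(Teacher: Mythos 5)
Your proposal is correct and follows the approach the paper implicitly suggests: derive the ordinary Hilbert series by specializing the displayed bigraded Hilbert series $H(\mc[\mat_1];q,t)$ at $q=1$, resolving the $0/0$ form, and then transfer to the real case via Corollary~\ref{cor:idealofclosure}. I checked the L'H\^opital computation ($H(\mc[\mat_1];t)=M(\mc[\mat_1];1,t)+2M_q(\mc[\mat_1];1,t)$), the evaluations $M(1,t)=\frac{1-t+2t^2-t^3}{(1-t)^3}$ and $M_q(1,t)=\frac{2t}{(1-t)^4}$, and the final simplification to $\frac{1+2t+3t^2-3t^3+t^4}{(1-t)^4}$; all are correct, and the low-degree coefficients $1,6,21,49$ agree with what Corollary~\ref{cor:main3} predicts.
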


Finally we point out a connection between Corollary~\ref{cor:main3} and coincident root loci. 
Denote by $\mathrm{Pol}_d(\mc^2)$ the $SL(2,\mc)$-module of binary forms of degree $d$. 
Up to non-zero scalar multiples there is a unique $SL(2,\mc)$-module isomorphism 
$\varphi:\tzmat\to \mathrm{Pol}_4(\mc^2)$ (where we view the $SO(3,\mc)$-module $\tzmat$ an $SL(2,\mc)$-module via the surjection 
$SL(2,\mc)\to SO(3,\mc)$. 
As we pointed out in the proof of Proposition~\ref{prop:formcovariants}, the covariant $\tange_2$ corresponds to the \emph{Hessian} 
covariant $\mathrm{Hess}:\mathrm{Pol}_4(\mc)\to \mathrm{Pol}_4(\mc)$,  
and $\tange_3$ corresponds to the covariant $\mathrm{Pol}_4(\mc)\to \mathrm{Pol}_6(\mc)$ $Q\mapsto \mathrm{Jac}(Q,\mathrm{Hess}(Q))$. 
It is well known that the zero locus of the coefficient space of the latter covariant is the subset of binary quartics that are the square of a binary quadric 
(see  \cite{chipalkatti}), whence by Corollary~\ref{cor:main3} we conclude: 

\begin{proposition} \label{prop:quarticinterpretation} 
The $SL(2,\mc)$-equivariant vector space isomorphism $\varphi:\tzmat\to \mathrm{Pol}_4(\mc)$ maps the set $\tzmat_1$ of trace zero symmetric matrices with a minimal polynomial of degree at most $2$  onto the set of binary quartics that are the square of a binary quadric. 
\end{proposition}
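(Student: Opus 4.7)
The plan is to read off the proposition as a direct consequence of Corollary~\ref{cor:main3} together with the classical description of the zero locus of the Jacobian of a binary quartic and its Hessian.

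First, I would use Corollary~\ref{cor:main3}(ii)--(iv): the coordinate functions of $\tange_3\colon \mat\to V^{(3)}$ span $\ideal(\mat_1)_3$, and by Corollary~\ref{cor:main3}(i) this component generates $\ideal(\mat_1)$ as an ideal. In particular, set-theoretically $\mat_1=\{A\in\mat\mid \tange_3(A)=0\}$, and intersecting with the trace-zero subspace,
\[ \tzmat_1=\{A\in\tzmat\mid \tange_3(A)=0\}. \]

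Next, as was recalled in the proof of Proposition~\ref{prop:formcovariants}, under the $SL(2,\mc)$-equivariant isomorphism $\varphi\colon \tzmat\to \mathrm{Pol}_4(\mc^2)$ the covariant $\tange_3\colon \tzmat\to V^{(3)}$ corresponds to the Jacobian covariant $Q\mapsto \mathrm{Jac}(Q,\mathrm{Hess}(Q))$ on $\mathrm{Pol}_4(\mc^2)$. More precisely, both $V^{(3)}$ (as an $SO(3,\mc)$-module pulled back to $SL(2,\mc)$) and $\mathrm{Pol}_6(\mc^2)$ are irreducible $SL(2,\mc)$-modules of dimension $7$, so by Schur's lemma there is an $SL(2,\mc)$-module isomorphism $\psi\colon V^{(3)}\to \mathrm{Pol}_6(\mc^2)$, unique up to scalar, and for a suitable nonzero constant $c$,
\[ \psi\circ\tange_3 \;=\; c\cdot \mathrm{Jac}(\,\cdot\,,\mathrm{Hess}(\,\cdot\,))\circ \varphi. \]
Since $\psi$ is an isomorphism, $\tange_3(A)=0$ if and only if $\mathrm{Jac}(\varphi(A),\mathrm{Hess}(\varphi(A)))=0$. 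Therefore $\varphi$ restricts to a bijection between $\tzmat_1$ and the zero locus of the Jacobian covariant in $\mathrm{Pol}_4(\mc^2)$.

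Finally, I would invoke the classical result cited from \cite{chipalkatti}: a binary quartic $Q$ satisfies $\mathrm{Jac}(Q,\mathrm{Hess}(Q))=0$ precisely when $Q$ is the square of a binary quadric. Combining this with the identification in the previous step yields the proposition. There is essentially no obstacle beyond matching the two covariants correctly; the only subtlety is to note that the identification between $\tange_3$ and the Jacobian covariant is canonical up to a nonzero scalar, which is enough since we only care about the zero locus.
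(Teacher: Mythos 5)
Your proof is correct and follows essentially the same route as the paper: reduce to the zero locus of $\tange_3$ on $\tzmat$ using Corollary~\ref{cor:main3}, transport $\tange_3$ to the Jacobian covariant $Q\mapsto\mathrm{Jac}(Q,\mathrm{Hess}(Q))$ via the $SL(2,\mc)$-equivariant isomorphism $\varphi$ (as asserted in the proof of Proposition~\ref{prop:formcovariants}), and invoke the classical fact from \cite{chipalkatti} that this covariant vanishes precisely on squares of binary quadrics. One minor imprecision: Schur's lemma only gives the uniqueness up to scalar of the module isomorphism $\psi\colon V^{(3)}\to\mathrm{Pol}_6(\mc^2)$; to conclude $\psi\circ\tange_3 = c\cdot\mathrm{Jac}(\,\cdot\,,\mathrm{Hess}(\,\cdot\,))\circ\varphi$ you additionally need that the space of degree-$3$ covariants of a binary quartic with values in $\mathrm{Pol}_6(\mc^2)$ is one-dimensional (a classical fact about covariants of quartics), which is exactly what the paper quietly uses when it asserts the correspondence $\tange_3\leftrightarrow\mathrm{Jac}(Q,\mathrm{Hess}(Q))$ in Proposition~\ref{prop:formcovariants}.
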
 

In fact it is  known that the coefficients of $\mathrm{Jac}(Q,\mathrm{Hess}(Q))$ generate the vanishing ideal of the set of binary quartics that are the square of a 
quadric, see   \cite{chipalkatti}, where this is stated (without the concrete computational details), after an explanation of  a general method for  
the study of ideals of coincident root loci in the space of binary forms of degree $d$.  So it would be  possible to derive our Corollary~\ref{cor:main3}  from this 
result with the aid of Proposition 4.1 in \cite{domokos} and Proposition~\ref{prop:coordfunctions} of the present paper.    
For further results on coincident root loci  see the papers 
\cite{chipalkatti},   \cite{paramanathan}, \cite{weyman3} (and the references therein). 

%%%%%%%%%%%%%%%%%%%%%%%%%%%%%%%

\section{Real forms and sums of squares}\label{sec:real}  

Recall that the compact real form $SU(n)$ is Zariski dense in the complex affine algebraic group  $SL(n,\mc)$, hence an irreducible $SL(n,\mc)$-module remains irreducible over $SU(n)$. 
For a compact real  Lie group $G$ and a finite dimensional complex $G$-module $V$ denote $V_{\mr}$ the {\it realification} of $V$, and for a finite dimensional 
real $G$-module $W$, its  {\it complexification} is $\mc\otimes_{\mr}W$.  
The realification $\mathrm{S}^n(\mc^n)^\star_{\mr}$ of the $n$th symmetric power of the dual of the natural $SU(n)$-module  $\mc^n$ is irreducible as a real representation of $SU(n)$, whereas  its complexification splits as 
\[\mc\otimes_{\mr} \mathrm{S}^n(\mc^n)^\star_{\mr}\cong \mathrm{S}^n(\mc^n)^\star \oplus \mathrm{S}^n(\mc^n)\] 
as a complex $SU(n)$-module. 
Set 
\begin{equation}\label{eq:tangeujra} \rootdisc:\hermat(n)\to\mathrm{S}^n(\mc^n)^\star_{\mr}, \quad  
A\mapsto (\underline{x} \mapsto \det (\underline{x}\vert A\underline{x}\vert \dots \vert A^{n-1}\underline{x})
\end{equation}
where for $\underline{x}\in \mc^n$ and $A\in\hermat(n)$ we write $(\underline{x}\vert A\underline{x}\vert\dots\vert A^{n-1}\underline{x}))$ for the  $n\times n$ matrix 
whose columns are $\underline{x}$, $A\underline{x}$, $\dots$, $A^{n-1}\underline{x}$,  and 
$\mathrm{S}^n(\mc^n)^\star$ is identified with the space of homogeneous forms of degree $n$ on $\mc^n$. 
For a diagonal matrix $A=\mathrm{diag}(a_1,\dots,a_n)$ we have 
\begin{equation} \label{eq:diagonal}
\rootdisc(A)(\underline{x})=x_1\dots x_n\prod_{1\le i<j\le n}(a_j-a_i)
\end{equation} 
hence $c$ is non-zero. 
We obtain the following statement: 

\begin{proposition}\label{prop:sunsubmodule}
The $2\binom{2n-1}{n-1}$ real coordinate functions of $\rootdisc$ span an $SU(n)$-submodule in $\ideal(\hermat(n)_1)_{\binom{n}{2}}$ isomorphic to 
$\mathrm{S}^n(\mc^n)^\star_{\mr}$. 
\end{proposition}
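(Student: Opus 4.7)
The plan is to check three things: (i) $\rootdisc$ is $SU(n)$-equivariant, so it is a covariant in the sense of Section~\ref{sec:general}; (ii) every coordinate function of $\rootdisc$ lies in $\ideal(\hermat(n)_1)_{\binom{n}{2}}$; and (iii) the resulting real $SU(n)$-submodule $W \subset \mr[\hermat(n)]$ is abstractly isomorphic to $\mathrm{S}^n(\mc^n)^\star_{\mr}$.

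For (i), the column-change computation performed for $\tange_3$ in Section~\ref{sec:tange} goes through without modification once one notes that $\det(g)=1$ for every $g\in SU(n)$. For (ii), the key remark is that the spectral theorem renders every Hermitian matrix diagonalizable, so $A\in\hermat(n)_1$ forces $\deg(m_A)\le n-1$, and hence $I_n,A,\dots,A^{n-1}$ are linearly dependent as $n\times n$ matrices. Applying any such dependence to an arbitrary $\underline{x}\in\mc^n$ produces a non-trivial linear relation among the columns of $(\underline{x}\vert A\underline{x}\vert\dots\vert A^{n-1}\underline{x})$, so its determinant vanishes. The coordinate functions are then homogeneous of degree $0+1+\dots+(n-1)=\binom{n}{2}$ by column-wise expansion of the determinant.

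For (iii), write $V:=\mathrm{S}^n(\mc^n)^\star_{\mr}$, which the paragraph preceding the proposition asserts is an irreducible real $SU(n)$-module. The comorphism $\rootdisc^\#:V^\star\to\mr[\hermat(n)]$ is an $SU(n)$-equivariant $\mr$-linear map whose image is precisely $W$. Formula~\eqref{eq:diagonal} gives $\rootdisc(\mathrm{diag}(1,2,\dots,n))=\bigl(\prod_{1\le i<j\le n}(j-i)\bigr)x_1\cdots x_n\neq 0$, so $\rootdisc^\#\neq 0$; by Schur's lemma applied to the irreducible real module $V^\star$, the map $\rootdisc^\#$ is injective, giving $W\cong V^\star$ as $SU(n)$-modules. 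Finally, any $SU(n)$-invariant real inner product on $V$ (obtained by averaging over the compact group $SU(n)$) supplies a $G$-module isomorphism $V\cong V^\star$, so $W\cong V=\mathrm{S}^n(\mc^n)^\star_{\mr}$; in particular $\dim_\mr W=2\binom{2n-1}{n-1}$, matching the stated count.

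I do not anticipate a substantive obstacle: every step reduces to the spectral theorem, the same determinantal trick used for $\tange_3$, or Schur's lemma applied to an irreducible module already provided by the setup. The one point worth flagging is that Schur must be invoked over $\mr$ rather than over $\mc$, which is precisely why the preceding paragraph's identification of $\mathrm{S}^n(\mc^n)^\star_{\mr}$ as an \emph{irreducible real} $SU(n)$-representation is used essentially.
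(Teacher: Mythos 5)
Your argument is correct and matches the paper, which presents the proposition without a separate proof and instead relies on the preceding observations (the equivariance of $\rootdisc$ by the same computation as for $\tange_3$, the vanishing of $\rootdisc$ on $\hermat(n)_1$ since $I_n,A,\dots,A^{n-1}$ are dependent there, the non-vanishing from \eqref{eq:diagonal}, and the irreducibility of $\mathrm{S}^n(\mc^n)^\star_{\mr}$ as a real $SU(n)$-module). Your proposal fills in exactly these steps, correctly adding the Schur-over-$\mr$ and self-duality remarks needed to pass from the comorphism's target $V^\star$ back to $V$.
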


The same proof as for Theorem 4.1 in \cite{domokos2} yields the following: 

\begin{proposition}\label{prop:subdiscunique} 
Up to non-zero scalar multiples $\sdisc_k\in\mr[\hermat(n)]$ is the only $SU(n)$-invariant element in the degree $(n-k)(n-k-1)$ homogeneous component of 
$\ideal(\hermat(n)_{k+1})$, and there are no non-zero $SU(n)$-invariants in $\ideal(\hermat(n)_{k+1})$ of degree  less than  $(n-k)(n-k-1)$. 
\end{proposition}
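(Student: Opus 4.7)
The plan is to transfer the statement, via the spectral theorem, to a question about symmetric polynomials in the eigenvalues, and then to adapt the argument from \cite{domokos2}, Theorem~4.1.

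First, every Hermitian matrix is $SU(n)$-conjugate to a real diagonal matrix, and two such diagonal matrices are conjugate if and only if their diagonals agree up to permutation. Restriction to the subspace $D$ of real diagonal matrices therefore gives a graded $\mr$-algebra isomorphism
\begin{equation*}
\mr[\hermat(n)]^{SU(n)} \xrightarrow{\sim} \mr[\lambda_1, \ldots, \lambda_n]^{S_n}
\end{equation*}
(with $\deg \lambda_i = 1$), and by scalar extension the same holds over $\mc$. Under this isomorphism, the ideal of $SU(n)$-invariants vanishing on $\hermat(n)_{k+1}$ corresponds to the ideal of symmetric polynomials vanishing on
\begin{equation*}
L := \bigl\{(\lambda_1, \ldots, \lambda_n) \in \mr^n : |\{\lambda_1, \ldots, \lambda_n\}| \le n-k-1\bigr\},
\end{equation*}
and $\sdisc_k$ corresponds to $\sum_{|S|=n-k}\prod_{i<j,\, i,j\in S}(\lambda_i - \lambda_j)^2$. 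It therefore suffices to show that, among nonzero $S_n$-invariant polynomials vanishing on $L$, the smallest degree is $(n-k)(n-k-1)$ and is realized uniquely (up to scalar) by this expression.

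Second, I would implement a doubling argument parallel to the one in \cite{domokos2}. For each $(n-k)$-subset $S \subset \{1, \ldots, n\}$ set $v_S := \prod_{i<j,\,i,j \in S}(\lambda_i - \lambda_j)$. Each $v_S$ has degree $\binom{n-k}{2}$, vanishes on $L$ (by the pigeonhole principle: $\lambda \in L$ forces two members of $(\lambda_i)_{i\in S}$ to coincide), and is alternating in $\{\lambda_i : i \in S\}$. One checks that $L = \bigcap_S \{v_S = 0\}$ set-theoretically and, via a local analysis at generic points of each linear stratum of $L$, that the $v_S$'s generate the reduced ideal of $L$, so $\ideal(L) = (v_S)_{|S|=n-k}$ in $\mc[\lambda_1,\ldots,\lambda_n]$. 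Writing a symmetric $f \in \ideal(L)$ as $f = \sum_S g_S v_S$ and imposing the full $S_n$-symmetry forces each $g_S$ to be alternating in $\{\lambda_i : i \in S\}$, hence divisible by $v_S$; setting $g_S = h_S v_S$ yields $f = \sum_S h_S v_S^2$ and consequently $\deg f \ge 2\binom{n-k}{2} = (n-k)(n-k-1)$. When this lower bound is attained, all $h_S$ are scalars, and transitivity of $S_n$ on the set of $(n-k)$-subsets (with the signs of the permutation action absorbed by the squaring) forces these scalars to coincide, so $f$ is a scalar multiple of $\sum_S v_S^2 = \sdisc_k$, giving uniqueness. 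The non-symmetric lower bound $\binom{n-k}{2}$ used implicitly above is the Kleitman-Lov\'asz input [lovasz, Theorem~2.4] already invoked earlier in the paper.

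The main obstacle is the identification $\ideal(L) = (v_S)$, i.e., the radicality of the ideal generated by the $v_S$'s. This requires an explicit local computation: at a generic point of any stratum of $L$ (indexed by a partition of $n$ into at most $n-k-1$ parts), one must show that the leading linear parts of a suitably chosen family of $v_S$'s generate the reduced ideal of that stratum. This is precisely the combinatorial work carried out in \cite{domokos2} for the real symmetric case, and since everything is phrased at the level of symmetric polynomials in the eigenvalues once the spectral reduction has been made, that proof transfers to the Hermitian setting verbatim.
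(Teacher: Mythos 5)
Your proof is correct and is essentially the argument the paper invokes by its one-line reference to Theorem~4.1 of \cite{domokos2}: the Chevalley-type restriction $\mr[\hermat(n)]^{SU(n)}\cong\mr[\lambda_1,\dots,\lambda_n]^{S_n}$ reduces the claim to a statement about symmetric polynomials, the Kleitman--Lov\'asz theorem supplies $\ideal(L)=(v_S)_{|S|=n-k}$, and the doubling argument finishes. One small imprecision in the doubling step: the representation $f=\sum_S g_Sv_S$ is not unique, so ``imposing $S_n$-symmetry forces each $g_S$ to be alternating'' should instead read that averaging such a representation over $S_n$ produces a new one whose coefficient $\tilde g_S$ is alternating under permutations of $\{\lambda_i : i\in S\}$ (hence divisible by $v_S$); with that standard symmetrization the argument goes through exactly as you sketch.
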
 

By   Lemma 2.1  in \cite{domokos} this yields: 

\begin{corollary}\label{cor:sumofsquares} 
The discriminant $\disc\in\mr[\hermat(n)]$ can be written as the sum of $2\binom{2n-1}{n-1}$ squares. 
\end{corollary}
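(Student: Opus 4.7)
The plan is to combine Propositions~\ref{prop:sunsubmodule} and \ref{prop:subdiscunique} in the standard way that Lemma~2.1 of \cite{domokos} abstracts. Set $N=\binom{2n-1}{n-1}=\dim_{\mc}\mathrm{S}^n(\mc^n)^\star$, so the real $SU(n)$-module $V:=\mathrm{S}^n(\mc^n)^\star_{\mr}$ has real dimension $2N$. Since $SU(n)$ is compact, $V$ admits an $SU(n)$-invariant positive definite real inner product; fix an orthonormal real basis of $V$ and let $c_1,\dots,c_{2N}\in\mr[\hermat(n)]$ be the corresponding real coordinate functions of $\rootdisc$ under the embedding furnished by Proposition~\ref{prop:sunsubmodule}.

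Now consider $P:=\sum_{i=1}^{2N}c_i^2\in\mr[\hermat(n)]$. By construction $SU(n)$ acts orthogonally on the $\mr$-span of the $c_i$, so $P$ is $SU(n)$-invariant. Every $c_i$ is a coordinate function of $\rootdisc$, which vanishes on $\hermat(n)_1$ (indeed $\rootdisc$ lands in $\ideal(\hermat(n)_1)$ by Proposition~\ref{prop:sunsubmodule}), hence $P\in\ideal(\hermat(n)_1)$. Finally $P$ is homogeneous of degree $2\binom{n}{2}=n(n-1)=\deg(\disc)$. Applying Proposition~\ref{prop:subdiscunique} with $k=0$, which asserts that up to scalar $\disc=\sdisc_0$ is the unique $SU(n)$-invariant element of $\ideal(\hermat(n)_1)$ of degree $n(n-1)$, we conclude $P=\lambda\cdot\disc$ for some $\lambda\in\mr$.

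It remains to check that $\lambda>0$, after which dividing each $c_i$ by $\sqrt\lambda$ exhibits $\disc$ as a sum of $2\binom{2n-1}{n-1}$ squares. For this, evaluate at a real diagonal matrix $A=\mathrm{diag}(a_1,\dots,a_n)$ with pairwise distinct entries: formula \eqref{eq:diagonal} gives $\rootdisc(A)(\underline{x})=x_1\cdots x_n\prod_{i<j}(a_j-a_i)\neq 0$, hence $P(A)=\sum c_i(A)^2>0$, while $\disc(A)=\prod_{i<j}(a_j-a_i)^2>0$. Therefore $\lambda>0$.

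No step of this plan presents a serious obstacle: Propositions~\ref{prop:sunsubmodule} and \ref{prop:subdiscunique} do the real work, and the remaining argument is the routine ``averaged invariant inner product yields a sum of squares'' packaging that is exactly the content of Lemma~2.1 of \cite{domokos}. The only point requiring a moment of care is fixing the sign via a concrete evaluation, which the diagonal case \eqref{eq:diagonal} settles immediately.
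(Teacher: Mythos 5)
Your argument is correct and is essentially the paper's own proof with the details filled in: the paper simply invokes Lemma~2.1 of \cite{domokos}, whose content is precisely the ``orthonormal basis of an invariant submodule yields an invariant sum of squares, which must then be a positive multiple of the unique invariant in that degree'' argument you spell out, combined with Propositions~\ref{prop:sunsubmodule} and \ref{prop:subdiscunique}. The sign check via \eqref{eq:diagonal} is a sensible addition and closes the one loose end.
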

 
 \begin{remark}\label{remark:gorodski} 
(i) The study of sum of squares representations of the discriminant of real symmetric matrices goes back to Kummer and Borchardt (see some references in \cite{domokos}, whose approach was inspired by \cite{lax:1998}). A relation to the entropic discriminant is established in \cite{sanyal-sturmfels-vinzant}. 
A sum of squares presentation of the discriminant of Hermitian matrices was shown by Newell \cite{newell}, Ilyushechkin \cite{ilyushechkin}, Parlett \cite{parlett}. Corollary~\ref{cor:sumofsquares} significantly reduces the number of summands in these presentations.  

(ii) Sum of squares presentations of discriminants for the isotropy representation of Riemannian symmetric spaces were studied by Gorodski \cite{gorodski} 
(and also in \cite{rais}). 
In particular, it is proved in \cite{gorodski} that  the discriminant associated to the symmetric space $Sp(n,\mr)/U(n)$ is the sum of $2\binom{2n-1}{n-1}$ squares (the corresponding representation of $U(n)$ is the  action $X\mapsto gXg^T$ on $\symmat(n,\mc)$). 
This number coincides with the number appearing in Corollary~\ref{cor:sumofsquares} above, but the associated symmetric space (and representation) is different, it is 
$SL(n,\mc)/SU(n)$ in our case. 

(iii) Similarly to \eqref{eq:tangeujra} consider the $SO(n)$-equivariant polynomial map
 \[ \rootdisc_{\mr}:\symmat(n,\mr)\to\mathrm{S}^n(\mr^n)^\star, \quad  
A\mapsto (\underline{x} \mapsto \det (\underline{x}\vert A\underline{x}\vert \dots \vert A^{n-1}\underline{x}))\] 
Since the $SO(n)$-orbit of any $A\in\symmat(n,\mr)$ contains a diagonal matrix and the Laplace operator  $\Delta:=\sum_{i=1}^n\frac{\partial ^2}{\partial  x_i^2}$ is 
$SO(n)$-equivariant, formula \eqref{eq:diagonal} shows that the  image of $\rootdisc_{\mr}$ is contained the space 
$\harmonics^n(\mr^n):=\mathrm{S}^n(\mr^n)^\star\cap \ker(\Delta)$ of $n$-variable spherical harmonics of degree $n$.  
Note that $SO(n)$-modules are self-dual. This shows that the $\mr$-subspace of the 
coordinate functions of $\rootdisc_{\mr}$ span an $SO(n)$-submodule in $\ideal(\symmat(n,\mr)_1)$ isomorphic to $\harmonics^n(\mr^n)$. 
Thus we obtained a more direct proof of the first statement of Theorem 6.2 from \cite{domokos} than the proof given in loc. cit..
 \end{remark}
 
 %%%%%%%%%%%%%%%%%%%%%%%%%%%%%%
 
 \section{Subdiscriminants of Hermitian matrices}\label{sec:subdischer} 
 
In this section we extend the results of \cite{domokos2} on real symmetric matrices to the case of Hermitian matrices. In particular, we generalize 
Proposition~\ref{prop:sunsubmodule} and Corollary~\ref{cor:sumofsquares} of the present paper for the $k$-subdiscriminant of Hermitian matrices with arbitrary  $k$. 
Let $U$ denote the subgroup of upper unitriangular matrices in $SL(n,\mc)$ acting by conjugation on $\mc^{n\times n}$, and $T$ the subgroup of diagonal matrices in $SL(n,\mc)$. We identify $\mz^{n-1}$ with the group of rational characters of $T$: $\lambda=(\lambda_1,\dots,\lambda_{n-1})\in\mz^{n-1}$ corresponds 
to $\mathrm{diag}(z_1,\dots,z_{n-1},(z_1\dots z_{n-1})^{-1})\mapsto z_1^{\lambda_1}\dots z_{n-1}^{\lambda_{n-1}}$.   
The irreducible $SL(n,\mc)$-modules are labeled by $\lambda\in\mz^{n-1}$ with $\lambda_1\ge\dots\ge \lambda_{n-1}\ge 0$. We denote by  $V^{\lambda}$ 
the irreducible $SL(n,\mc)$-module with highest weight $\lambda$. 

First we present a family of highest weight vectors in $\mc[\mc^{n\times n}]$ (introduced by Tange \cite{tange}) in the spirit of Section~\ref{sec:real}.  
For an $n\times n$ matrix $B$, $1\le i_1<\dots<i_s\le n$, and $1\le j_1<\dots <j_t\le n$ denote by $B_{i_1,\dots,i_s}^{j_1,\dots,j_t}$ the $s\times t$ submatrix of $B$ obtained by omitting the rows of index other than $i_1,\dots,i_s$ and the  columns   of index other than $j_1,\dots,j_t$. 
Define $\rootsdisc_k\in \mc[\mc^{n\times n}]$ ($k=1,\dots,n-1$) by 
\[\rootsdisc_k(A):=\det([Ae_1\vert A^2e_1\vert\dots\vert A^{n-k}e_1]_{k+1,k+2,\dots,n}^{1,\dots,n-k})\] 
where $e_1:=[1,0,\dots,0]^T$, $A\in\mc^{n\times n}$. For $g\in U$ we have $g^{-1}e_1=e_1$, so 
\begin{eqnarray*}(g^{-1}\cdot \rootsdisc_k)(A)=\rootsdisc_k(gAg^{-1})=
\det([gAe_1\vert gA^2e_1\vert\dots\vert gA^{n-k}e_1]_{k+1,\dots,n}^{1,\dots,n-k})\\
=\det(g_{k+1,\dots,n}^{k+1,\dots,n}[ Ae_1\vert A^2e_1\vert\dots\vert A^{n-k}e_1]_{k+1,\dots,n}^{1,\dots,n-k}=
\rootsdisc_k(A)
\end{eqnarray*}
by multiplicativity of the determinant and since $g_{k+1,\dots,n}^{k+1,\dots,n}$ is upper unitriangular, hence has determinant $1$. 
Thus $\rootsdisc_k$ is $U$-invariant. Moreover, we have 
\[\mathrm{diag}(z_1,\dots,z_{n-1},(z_1\dots z_{n-1})^{-1})\cdot \rootsdisc_k=z_1^{n-k+1}z_2z_3\dots z_k\rootsdisc_k \] 
So $\rootsdisc_k$ is a highest weight vector in $\mc[\mc^{n\times n}]$ of weight $(n-k+1,1^{k-1})$ 
(we write $1^r$ for the sequence $1,\dots,1$ with $r$ terms), therefore it generates an irreducible $SL(n,\mc)$-module isomorphic 
to $V^{(n-k+1,1^{k-1})}$. 
For an irreducible complex  $SL(n,\mc)$-module $V^{\lambda}$ write $V^{\lambda}_{\mr}$ for $V^{\lambda}$ viewed as a real representation of $SU(n)$. 

 We obtain the following extension of Proposition~\ref{prop:sunsubmodule} and  Corollary~\ref{cor:sumofsquares}, which correspond to the special case $k=0$: 
 
 \begin{theorem}\label{thm:main4} Let $n\ge 3$  and $0\le k\le n-3$ be integers. 
 \begin{itemize}
\item[(i)] For $d<\binom{n-k}2$ the degree $d$ homogeneous component of $\ideal(\hermat(n)_{k+1})\triangleleft \mr[\hermat(n)]$ is zero. 

\item[(ii)] The degree $\binom{n-k}{2}$ homogeneous component of  $\ideal(\hermat(n)_{k+1})$ contains an irreducible real $SU(n)$-submodule isomorphic to 
$V^{(n-k,1^{k})}_{\mr}$. 

\item[(iii)] The $k$-subdiscriminant $\sdisc_{k}\in\mr[\hermat(n)]$ can be written as the sum of $2\dim_{\mc}(V^{(n-k,1^k)})$ squares. 
\end{itemize}
 \end{theorem}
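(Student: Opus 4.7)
The plan is to establish (ii) first, then deduce (i) by a Kleitman-Lov\'asz argument on the real diagonal torus, and finally obtain (iii) from (ii) via the covariant-to-sum-of-squares recipe of Lemma~2.1 of \cite{domokos}.

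For (ii) I would take the highest weight vector $\rootsdisc_{k+1}\in\mc[\mc^{n\times n}]$ singled out above. The computations preceding the theorem already record that $\rootsdisc_{k+1}$ is $U$-invariant of $T$-weight $(n-k,1^k)$ and homogeneous of degree $1+2+\dots+(n-k-1)=\binom{n-k}{2}$, so the $SL(n,\mc)$-submodule of $\mc[\mc^{n\times n}]$ it generates is irreducible and isomorphic to $V^{(n-k,1^k)}$. The essential new step is to show that $\rootsdisc_{k+1}$ vanishes on $(\mc^{n\times n})_{k+1}$: if $\deg(m_A)\le n-k-1$, then $A^{n-k-1}e_1$ is a $\mc$-linear combination of $e_1,Ae_1,\dots,A^{n-k-2}e_1$, and since $e_1$ is supported in row $1$ the projection to rows $k+2,\dots,n$ annihilates the $e_1$ summand; the last column of the submatrix defining $\rootsdisc_{k+1}(A)$ is therefore a linear combination of the earlier columns and the determinant vanishes. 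By Corollary~\ref{cor:idealofclosure}, $\rootsdisc_{k+1}\vert_{\hermat(n)}$ then lies in $\mc\otimes_\mr\ideal(\hermat(n)_{k+1})$, and it is nonzero because $\hermat(n)+i\hermat(n)=\mc^{n\times n}$ and $\rootsdisc_{k+1}$ is a nonzero polynomial on $\mc^{n\times n}$. A direct inspection of the fundamental-weight decomposition $(n-k-1)\omega_1+\omega_k$ versus its dual $(n-k-1)\omega_{n-1}+\omega_{n-k}$ shows that for $0\le k\le n-3$ the $SL(n,\mc)$-module $V^{(n-k,1^k)}$ is not self-dual, hence of complex type for $SU(n)$; consequently the $\mr$-span of the real and imaginary parts of the $SU(n)$-orbit of $\rootsdisc_{k+1}\vert_{\hermat(n)}$ is a real $SU(n)$-submodule of $\mr[\hermat(n)]_{\binom{n-k}{2}}$ isomorphic to $V^{(n-k,1^k)}_\mr$.

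For (i) I would follow the scheme of Corollary~5.3 of \cite{domokos2}. Given a nonzero homogeneous $f\in\ideal(\hermat(n)_{k+1})$ of degree $d$, some $SU(n)$-translate $g\cdot f$ has nonzero restriction to the real diagonal subspace $\mathfrak{t}_\mr\cong\mr^n$, because otherwise $f$ would vanish on the $SU(n)$-saturation of $\mathfrak{t}_\mr$, which by the spectral theorem is all of $\hermat(n)$. Since $\ideal(\hermat(n)_{k+1})$ is $SU(n)$-stable, this restriction is a nonzero homogeneous polynomial of degree $d$ on $\mr^n$ vanishing on the set of tuples with at most $n-k-1$ distinct entries; the Kleitman-Lov\'asz theorem (Theorem~2.4 of \cite{lovasz}) then forces $d\ge\binom{n-k}{2}$.

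For (iii) I would invoke Lemma~2.1 of \cite{domokos}: fix an $SU(n)$-invariant positive definite inner product on the real submodule $W\subset\ideal(\hermat(n)_{k+1})_{\binom{n-k}{2}}$ constructed in (ii) and let $f_1,\dots,f_N$ be an orthonormal basis, where $N=\dim_\mr W=2\dim_\mc V^{(n-k,1^k)}$. Then $\sum_{i=1}^N f_i^2$ is $SU(n)$-invariant, homogeneous of degree $(n-k)(n-k-1)$, and lies in $\ideal(\hermat(n)_{k+1})$, so by Proposition~\ref{prop:subdiscunique} it equals a scalar multiple of $\sdisc_k$; the scalar is strictly positive because both polynomials are nonnegative real-valued and not identically zero on $\hermat(n)$, and absorbing it into the $f_i$ yields $\sdisc_k$ as a sum of $N$ squares. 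The step I expect to require the most care is the realification argument in (ii): verifying that the resulting real submodule of $\mr[\hermat(n)]$ is genuinely isomorphic to $V^{(n-k,1^k)}_\mr$ rather than to some twisted or reducible real form depends on the complex-type conclusion, which is precisely why the theorem restricts to $0\le k\le n-3$.
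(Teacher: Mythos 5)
Your proposal is correct and follows essentially the same route as the paper: the key step for (ii) is identical (show $\rootsdisc_{k+1}$ vanishes on $(\mc^{n\times n})_{k+1}$ because the last column of the defining submatrix is a linear combination of the earlier ones, use Corollary~\ref{cor:idealofclosure} to transfer to the real ideal, and then take the realification of the non-self-conjugate module $V^{(n-k,1^k)}$), and (iii) is obtained from (ii) via Lemma 2.1 of \cite{domokos} and Proposition~\ref{prop:subdiscunique} exactly as the paper does. The one genuine variation is in (i): the paper simply cites Lemma 2.1 of \cite{domokos} together with Proposition~\ref{prop:subdiscunique} (whose proof in turn rests on the restriction-to-diagonal Kleitman--Lov\'asz argument imported from \cite{domokos2}), whereas you unwind that citation and give the restriction argument directly; both are the same underlying mathematics, the paper's version just routes through the already-established statement about $SU(n)$-invariants. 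One small slip: the fundamental-weight decomposition of $(n-k,1^k)$ is $(n-k-1)\omega_1+\omega_{k+1}$ (with dual $(n-k-1)\omega_{n-1}+\omega_{n-k-1}$), not $(n-k-1)\omega_1+\omega_k$; this does not affect the conclusion that the module fails to be self-dual for $0\le k\le n-3$.
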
  
 
 \begin{proof} (i) follows from Lemma 2.1 in \cite{domokos} and Proposition~\ref{prop:subdiscunique}, and the latter two statements together with (ii) imply (iii) as well. 
 
 In order to prove (ii) note that  for $A\in (\mc^{n\times n})_{k+1}$ the matrices 
 $I_n,A,\dots,A^{n-k-1}$ are linearly dependent, hence 
 \[\det([e_1\vert Ae_1\vert A^2e_1\vert\dots\vert A^{n-k-1}e_1]_{1,k+2,\dots,n}^{1,\dots,n-k})=0.\] 
 By elementary properties of the determinant the left hand side coincides with $\rootsdisc_{k+1}(A)$. 
 This shows that the degree $\binom{n-k}{2}$ highest weight vector  $\rootsdisc_{k+1}$ constructed above belongs to 
 $\mc\otimes_{\mr}\ideal(\hermat(n)_{k+1})=
 \ideal((\mc^{n\times n})_{k+1})$. 
 Thus the complexification of $\ideal(\hermat(n)_{k+1})_{\binom{n-k}{2}}$ contains the irreducible complex $SL(n,\mc)$-module $V^{(n-k,1^k)}$. View 
 $V^{(n-k,1^k)}$ as an irreducible complex $SU(n)$-module. 
 It is not self-conjugate, hence its realification $V^{(n-k,1^k)}_{\mr}$ is an irreducible real $SU(n)$-module. Consequently, $\ideal(\hermat(n)_{k+1})_{\binom{n-k}{2}}$ 
contains an $SU(n)$-submodule  $V^{(n-k,1^k)}_{\mr}$  (it is spanned by the real and imaginary parts of a $\mc$-basis of the $SU(n)$-module generated by $f_k$ in $\mc\otimes_{\mr}\mr[\hermat(n)]$). 
 \end{proof} 
 
 The Weyl dimension formula provides an explicit expression for  $\dim_{\mc}(V^{(n-k,1^k)})$, see for example page 303 in \cite{goodman-wallach}. 
 
 \begin{center} {\it Acknowledgement. } \end{center} 
We are grateful to J. K. Merikoski for asking in \cite{merikoski} whether the methods of \cite{domokos} work for Hermitian matrices, and to P. Frenkel, D. Jo\'o and M. Ra{\"\i}s 
for some discussions on the topic of this paper.

%%%%%%%%%%%%%%%%%%%%%%%%%%%%%%%%%%%%

\end{document}